\newcommand*{\abs}[1]{\lvert #1\rvert}
\newtheorem{defi}{Definition}
\newtheorem{conj}[defi]{Conjecture}
\newtheorem{cor}[defi]{Corollary}
\newtheorem{thr}[defi]{Theorem}
\newtheorem*{theorem*}{Theorem}
\newtheorem{prop}[defi]{Proposition}
\newtheorem{question}[defi]{Question}
\newtheorem{remark}[defi]{Remark}
\newtheorem{claim}[defi]{Claim}
\newcommand*{\myproofname}{Proof}
\newenvironment{claimproof}[1][\myproofname]{\begin{proof}[#1]}{\end{proof}}
\def\F{\mathcal{F}}
\newcommand{\diam}{diam}
\newcommand{\rad}{rad}
\global\long\def\sc#1{\textcolor{blue}{\textbf{[SC comments:} #1\textbf{]}}}
\title{Set systems without a simplex, Helly hypergraphs and union-efficient families}
\author{Stijn Cambie\thanks{Extremal Combinatorics and Probability Group (ECOPRO), Institute for Basic Science (IBS), Daejeon, South Korea, supported by the Institute for Basic Science (IBS-R029-C4),
E-mail: {\tt stijn.cambie@hotmail.com, nikasalia@yahoo.com}} \and Nika Salia\footnotemark[1] 
}
\date{}
\begin{document}

\maketitle

\begin{abstract}

We present equivalent formulations for concepts related to set families for which every subfamily with empty intersection has a bounded sub-collection with empty intersection. Hereby, we summarize the progress on the related questions about the maximum size of such families. 

In this work we solve a boundary case of a problem of Tuza for non-trivial $q$-Helly families, by applying Karamata's inequality and determining the minimum size of a $2$-self-centered graph for which the common neighborhood of every pair of vertices contains a clique of size $q-2$.

    %We prove a boundary case of a problem of Tuza for non-trivial $q$-Helly families. 
    %In the proof, we construct a graph of elements that are not contained in a common set and use Karamata's inequality to give upper bounds on the size of the family. To do so, we
    %determine the minimum size of a graph for which every $2$ vertices their common neighborhood contains a $K_{q-2},$ which is related with Buckley's result on $2$-self-centered graphs.
    %%In the proof, we also determine the minimum size of a graph for which every pair of vertices their common neighborhood contains a clique of size $q-2$.
    %which is related with Buckley's result on $2$-self-centered graphs.
    %Applying Karamata's inequality we determine the minimum size of a graph for which the common neighborhood of every pair of vertices contains a clique of size $q-2$.
\end{abstract}

\section{Introduction}
\begin{comment}
At first, in Subsection~\ref{subsec:introconcepts} we introduce a new concept. 
In Subsection~\ref{subsec:relations} we show its relation to some well-known concepts.
In Subsection~\ref{subsec:summary}, we summarize the main content of the paper.
\end{comment}

At first, in Subsection~\ref{subsec:introconcepts} we introduce new and existing concepts. 
In Subsection~\ref{subsec:relations} we show how these concepts are related and give a glimpse of some related problems.
In Subsection~\ref{subsec:summary}, we summarize the content of the paper.

Our notation follow~\cite{FT18}. 
Thus $[n]$ denotes $n$ element set $=\{1,2,\ldots,n\}$.
A subset of the power set of $[n]$, will be called a family $\F \subseteq 2^{[n]}.$
In the uniform case, every set of $\F$ is a set of size $k$, i.e. $\F \subseteq \binom{[n]}{k}.$
Such a family is  sometimes called a set system or $k$-uniform hypergraph, but in this work, we use the term family.
We use the following notation for the family of complements $\F^c= \{ A^c \mid A \in F\}$ where $A^c$ denotes $[n] \backslash A$.
The order and the size of a graph $G=(V,E)$ will be denoted with $\abs{V(G)}$ and $\abs{E(G)}$ respectively. The subgraph of $G$ induced by a set $A$ will be denoted by$~G[A].$

\subsection{Introduction of the concepts}\label{subsec:introconcepts}

%In this subsection, we introduce the concept of union-efficient families and some related notions (as shown in Subsection~\ref{subsec:relations}).

%A different viewpoint or equivalent formulation for the same problem can sometimes simplify it to find connections and insights, or help with searching the literature for concepts one does not know yet about the existence. In this subsection, we state the three notions.

\underline{\textbf{Union-efficient families}}

When the twin-free graph having the largest order for a given number of maximal independent sets was characterized in the work of~\cite{CW22+}, the notion of union-efficient families naturally appeared.

\begin{defi}\label{defi:unionefficient}
    For  fixed integers $n$ and $m$, we call a family $\F \subseteq 2^{[n]}$ \textbf{union-efficient} if for every subfamily $\{A_1, A_2, \ldots, A_{m}\} \subseteq \F$ for which $\cup_{i \in [m]} A_i = [n]$, there are two indices $i,j \in [m]$ for which $A_i \cup A_j =[n].$
\end{defi}

At first sight, the notion of union-efficient seemed to be a new concept, but by considering the complement, it is related to existing concepts.
On the other hand, the terminology of union-efficient families is more in line with existing basic terminology in extremal set theory, see e.g.~\cite{FT18}.
As such, it is also natural to consider the following variants.

\begin{defi}\label{defi:efficient2*}
    For  fixed integers $n$, $m$ and $q$, a family $\F \subseteq 2^{[n]}$ is \textbf{union-$q$-efficient} if for every subfamily $\{A_1, A_2, \ldots, A_{m}\} \subseteq \F$ for which $\cup_{i \in [m]} A_i = [n]$, there is a subset $I \subseteq [m]$ of cardinality $q$ for which $\cup_{i \in I} A_i = [n].$

    A family $\F \subseteq 2^{[n]}$ is \textbf{intersection-$q$-efficient} if for every subfamily $\{A_1, A_2, \ldots, A_{m}\} \subseteq \F$ for which $\cap_{i \in [m]} A_i = \emptyset$, there is a subset $I \subseteq [m]$ of cardinality $q$ for which $\cap_{i \in I} A_i = \emptyset.$
\end{defi}

Note that, union-efficient denotes union-$2$-efficient.
Trivial families play an important role in extremal set-theoretic problems.
For problems about the unions/ intersections of sets, a family $\F$ is called trivial if $\cup_{A \in \F} A \not= [n]$ resp. $\cap_{A \in \F} A \not= \emptyset$ and non-trivial if $\cup_{A \in \F} A = [n]$ respectively $\cap_{A \in \F} A = \emptyset.$

\underline{\textbf{Helly hypergraphs and simplices}}

There are multiple ways to define Helly families and simplices. We follow~\cite{Mulder83}, thus at first we introduce  $q$-linked families and then we define Helly families.

\begin{defi}
    A family $\F$ is \textbf{$q$-linked} if the intersection of any $q$ sets in $\F$ is non-empty. That is, $\forall A_1, A_2 , \ldots, A_q \in \F$, $A_1 \cap A_2 \cap \ldots \cap A_q \not= \emptyset.$
\end{defi}

Helly's celebrated theorem on convex sets states, a finite collection of $n$ convex subsets of $\mathbb R^d$ has a non-empty intersection if every $d+1$ subsets have a non-empty intersection. This inspired the following notion for families of sets.

%\begin{defi}
%    A family $\F$ satisfies the \textbf{Helly property} if the intersection of all the members of every pairwise intersecting ($2$-linked) subfamily of $\F$ is non-empty. 
    
 %   A family $\F$ is \textbf{$q$-Helly} if every $q$-linked subfamily $\F' \subseteq \F$ is intersecting (has non-empty intersection) itself.
%\end{defi}

\begin{defi}
    A family $\F$ satisfies the \textbf{Helly property} if every $2$-linked (pairwise intersecting) subfamily $\F'$ of $\F$ has non-empty intersection ($\cap_{F\in \F}F\neq \emptyset$).

    A family $\F$ satisfies the \textbf{$q$-Helly property} if every $q$-linked subfamily $\F'$ of $\F$ has non-empty intersection ($\cap_{F\in \F'}F\neq \emptyset$). 
\end{defi}

Here we present another important concept.

\begin{defi}
    A \textbf{$q$-simplex} is a family with $q+1$ sets $\{A_1, A_2, \ldots ,A_{q+1}\}$ such that their intersection is the empty set, but the intersection of any $q$ of them is not empty.
    %Equivalently, a $q$-simplex is a family of size $q+1$ which is $q$-linked.
\end{defi}

\subsection{Relations between notions and with other problems}\label{subsec:relations}

The following theorem shows important connections between the concepts introduced in the previous subsection. 
%It is important to know/ remark the following equivalences, which can be proven straight from the definitions using contraposition and considering complement families.
%Due to the importance of connecting the current directions.

\begin{thr}\label{thm:connection}
    For a family $\F \subseteq 2^{[n]}$, the following statements are equivalent.
    \begin{enumerate}[label=(\roman*)]
    \item \label{itm:1} $\F$ does not contain an $r$-simplex for any $r \ge q$
    \item \label{itm:2} $\F$ is $q$-Helly
    \item \label{itm:3} $\F$ is intersection-$q$-efficient
    \item \label{itm:4} $\F^c$ is union-$q$-efficient
    \end{enumerate}
\end{thr}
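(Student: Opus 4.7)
The plan is to establish the cycle \ref{itm:1} $\Rightarrow$ \ref{itm:2} $\Rightarrow$ \ref{itm:3} $\Rightarrow$ \ref{itm:1}, and then to handle \ref{itm:3} $\Leftrightarrow$ \ref{itm:4} separately via De Morgan duality. For the latter, note that for any subcollection $\{A_i\}_{i \in I} \subseteq \F$ one has $\cap_{i \in I} A_i = \emptyset$ if and only if $\cup_{i \in I} A_i^c = [n]$; applying this both at the level of the full subfamily and at the level of the selected $q$-subcollection, intersection-$q$-efficiency of $\F$ becomes union-$q$-efficiency of $\F^c$ word for word.

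The implication \ref{itm:2} $\Rightarrow$ \ref{itm:3} is essentially a contrapositive unwinding of the definitions. Given a subfamily $\F' \subseteq \F$ with empty intersection, \ref{itm:2} rules out $\F'$ being $q$-linked, so by definition of $q$-linked some $q$ members of $\F'$ already have empty intersection, which is what \ref{itm:3} requires. The implication \ref{itm:3} $\Rightarrow$ \ref{itm:1} is equally direct: if $\F$ contained an $r$-simplex with $r \ge q$, that subfamily would have empty total intersection while every $r$-subcollection (and \emph{a fortiori} every $q$-subcollection, since $q \le r$) has non-empty intersection, which directly violates \ref{itm:3}.

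The least automatic step is \ref{itm:1} $\Rightarrow$ \ref{itm:2}, which I would carry out by a minimality argument. Assume $\F$ is not $q$-Helly, so some $q$-linked subfamily $\F' \subseteq \F$ has empty intersection. Pass to a subfamily $\F'' = \{A_1, \ldots, A_r\} \subseteq \F'$ that is inclusion-minimal with empty intersection; by minimality $\cap_{j \neq i} A_j \neq \emptyset$ for every $i \in [r]$, and together with $\cap_{j=1}^{r} A_j = \emptyset$ this exhibits $\F''$ as an $(r-1)$-simplex. Because $\F''$ inherits $q$-linkedness from $\F'$, no $q$ of its members have empty intersection, which forces $r \ge q+1$ and hence $r-1 \ge q$, contradicting \ref{itm:1}. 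The only delicate point in the whole argument is this last size check, ensuring the simplex produced by the minimality step is large enough to be forbidden by \ref{itm:1}.
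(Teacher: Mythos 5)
Your proposal is correct and follows essentially the same route as the paper: the cyclic chain \ref{itm:1}~$\Rightarrow$~\ref{itm:2}~$\Rightarrow$~\ref{itm:3}~$\Rightarrow$~\ref{itm:1} with a minimal-counterexample argument producing a simplex for the first implication, and De Morgan duality for \ref{itm:3}~$\Leftrightarrow$~\ref{itm:4}. Your size check ($q$-linkedness forcing $r\ge q+1$, hence a forbidden $(r-1)$-simplex) matches the paper's justification that the resulting simplex has dimension at least $q$.
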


\begin{proof}
    We prove the equivalences of \ref{itm:1}, \ref{itm:2} and \ref{itm:3} in the cyclic order, after we show the equivalence of \ref{itm:3} and \ref{itm:4}.
    
    \begin{itemize}
        \item[ \ref{itm:1} $\Rightarrow $ \ref{itm:2}] We prove the contraposition $\neg$ \ref{itm:2} $\Rightarrow \neg$ \ref{itm:1}. Let $\F'$ be a minimal $q$-linked subfamily of $\F$ with empty intersection. Suppose $\F'$ contains $r+1$ sets.
        Then any subfamily of $\F'$ with $r$ sets would also be $q$-linked and thus would not have empty intersection (otherwise $\F'$ was not minimal).
        Hence $\F'$ is a $r-$simplex. The condition $r \ge q$ holds since the intersection of $q$ sets is non-empty, while the intersection of the $r+1$ sets is empty.
        \item[\ref{itm:2} $\Rightarrow$ \ref{itm:3} ] 
        Suppose $\F$ is a $q$-Helly family and let $\F'=\{A_1, \ldots, A_r\} \subset \F$ be any subfamily whose intersection is the empty set.
        Since $\F$ is $q$-Helly, we know $\F'$ is not $q$-linked, so there are $q$ sets in $\F'$ with empty intersection.
        Since $\F'$ was taken arbitrarily, we know that $\F$ is intersection-$q$-efficient.
        \item[\ref{itm:3} $\Rightarrow$ \ref{itm:1}]
        Proving the contraposition $\neg$ \ref{itm:1} $\Rightarrow \neg$ \ref{itm:3} is immediate, since an $r$-simplex with $r\ge q$ contains $r+1$ sets with empty intersection for which no $r$ and hence no $q$ have empty intersection.
        \item [\ref{itm:3} $\Leftrightarrow$ \ref{itm:4}]
        Since the intersection of $r$ sets in $\F$ is the empty set if and only if the union of the complements of the $r$ sets (which belong to $\F^c$) is $[n],$ this equivalence is immediate from Definition~\ref{defi:efficient2*}.  \qedhere
    \end{itemize}
\end{proof}

Another equivalent form was established by Berge and Duchet~\cite[Thr.1]{BD75}.
\begin{thr}[\cite{BD75}]\label{thr:BD75}
    A family $\F \subseteq 2^{[n]}$ is $q$-Helly if and only if for every $A \subseteq [n]$ such that $\abs{A}=q+1,$
    $$ \bigcap_{B \in \F \colon \abs{A \cap B} \ge q} B \not= \emptyset.$$
\end{thr}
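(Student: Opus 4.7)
The plan is to prove the two directions separately, using pigeonhole for the forward implication and a simplex extraction for the reverse.

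For the forward direction, I assume $\F$ is $q$-Helly and fix an arbitrary $A \subseteq [n]$ with $\abs{A} = q+1$. I set $\F_A = \{B \in \F \colon \abs{A \cap B} \ge q\}$ and argue that $\F_A$ is $q$-linked: given any $q$ sets $B_1, \ldots, B_q \in \F_A$, each $B_i$ misses at most one element of $A$, so together they miss at most $q$ of the $q+1$ elements of $A$, leaving some $a \in A$ with $a \in B_1 \cap \cdots \cap B_q$. The $q$-Helly property applied to $\F_A$ then yields $\bigcap_{B \in \F_A} B \neq \emptyset$.

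For the reverse direction, I prove the contrapositive: if $\F$ is not $q$-Helly, then some $(q+1)$-set $A$ violates the displayed condition. Assuming $\F$ fails $q$-Helly, pick a $q$-linked subfamily with empty intersection and, inside it, a \emph{minimal} subfamily $\F'$ with empty intersection. By the argument already used in the proof of Theorem~\ref{thm:connection} (\ref{itm:1} $\Rightarrow$ \ref{itm:2}), $\F' = \{S_1, \ldots, S_{r+1}\}$ is an $r$-simplex with $r \ge q$. For each $i$, choose a witness $x_i \in \bigcap_{j \neq i} S_j$; note $x_i \notin S_i$, since otherwise the full intersection would be nonempty, and the $x_i$ are pairwise distinct by a short argument (if $x_i = x_j$ for $i \neq j$, then this element lies in every $S_k$).

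I then set $A = \{x_1, \ldots, x_{q+1}\}$, a set of size $q+1$. For $i \in [q+1]$, the set $S_i$ contains exactly $\{x_j : j \in [q+1], j \neq i\}$ from $A$, hence $\abs{S_i \cap A} = q$; for $i > q+1$, the set $S_i$ contains all of $x_1, \ldots, x_{q+1}$, hence $\abs{S_i \cap A} = q+1$. Thus every $S_i$ qualifies for the intersection in the theorem's displayed formula, so
\[
\bigcap_{B \in \F \colon \abs{A \cap B} \ge q} B \;\subseteq\; \bigcap_{i=1}^{r+1} S_i \;=\; \emptyset,
\]
giving the desired violation. The main subtlety to watch is the distinctness of the $x_i$ and the need to treat both $r=q$ and $r>q$ uniformly; once the witnesses are set up correctly the rest is bookkeeping, so I do not expect a serious obstacle beyond this step.
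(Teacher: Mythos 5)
Your proof is correct. The paper states Theorem~\ref{thr:BD75} only as a quoted result of Berge and Duchet, with no proof supplied, so there is nothing in the text to compare against; what you have written is a complete, self-contained argument. The forward direction (each $B$ with $\abs{A\cap B}\ge q$ misses at most one point of the $(q+1)$-set $A$, so any $q$ of them share a point of $A$, and $q$-Helly finishes) is exactly the natural pigeonhole argument, and it also silently covers the degenerate case $\abs{\F_A}\le q$, where the pigeonhole alone already gives a common element of $A$. The converse correctly reuses the minimal-empty-intersection extraction from the proof of Theorem~\ref{thm:connection} to produce an $r$-simplex with $r\ge q$, and your verification that the witnesses $x_i$ are distinct, that $\abs{S_i\cap A}=q$ for $i\le q+1$, and that $\abs{S_i\cap A}=q+1$ for $i>q+1$ is the right bookkeeping; in effect this direction re-derives the equivalence of the displayed condition with statement \ref{itm:1} of Theorem~\ref{thm:connection}. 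No gaps.
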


Asking the question about maximal uniform families satisfying one of the presented conditions turns out to be interesting and challenging.
One of the simplest non-trivial cases turns out to be equivalent to open hypergraph-Turán-problems, originating from the work of Tur\'{a}n~\cite{Tur41} (see~\cite{Keevash11} for a survey).
In contrast to simpler graph cases, there are plausibly many extremal hypergraphs~\cite{Kost82} and there is no stability in general~\cite{LM22} for hypergraph-Turán-problems. The equivalence was observed before in~\cite{BD79, Mulder83}, but 
for completeness, we prove it here from scratch, note that it also can be seen as a corollary of Theorem~\ref{thr:BD75}.

\begin{prop}\label{prop:equi_HT}
    Let $\F$ be a subset of $\binom{[n]}{3}$, then $\F$ is intersection-$3$-efficient if and only if $\F$ is a $3$-uniform hypergraph without a subgraph isomorphic to $K_4^{(3)}$, a four vertex $3$-uniform complete hypergraph.
\end{prop}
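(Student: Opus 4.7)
The plan is to derive the proposition directly from Theorem~\ref{thm:connection}, specifically the equivalence \ref{itm:1} $\Leftrightarrow$ \ref{itm:3}. That equivalence reduces the statement to showing that, for $\F\subseteq\binom{[n]}{3}$, $\F$ contains no $r$-simplex for any $r\ge 3$ if and only if $\F$ contains no copy of $K_4^{(3)}$.

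The easy direction is to observe that $K_4^{(3)}$ \emph{is} a $3$-simplex. Indeed, if $V=\{v_1,v_2,v_3,v_4\}$ and $A_i=V\setminus\{v_i\}$, then every three of the $A_i$'s intersect in the remaining vertex, while $\bigcap_{i=1}^4 A_i=\emptyset$. Hence a $K_4^{(3)}$-subgraph in $\F$ yields a $3$-simplex in $\F$.

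For the harder direction, I would take an arbitrary $r$-simplex $\{A_1,\ldots,A_{r+1}\}\subseteq\F$ with $r\ge 3$ and show that it is forced to be a copy of $K_4^{(3)}$. For each $j\in[r+1]$, pick $x_j\in\bigcap_{i\ne j}A_i$, which is nonempty by the simplex property. If we had $x_j\in A_j$ as well, then $x_j\in\bigcap_{i}A_i=\emptyset$, a contradiction; hence $x_j\notin A_j$. The same argument shows $x_j\ne x_k$ whenever $j\ne k$, since $x_j\in A_k$ while $x_k\notin A_k$. Consequently, fixing any index, say $1$, the set $A_1$ contains the $r$ distinct elements $x_2,x_3,\ldots,x_{r+1}$.

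The key squeeze is now immediate: since $|A_1|=3$, we must have $r\le 3$, and combined with $r\ge 3$ this forces $r=3$. Then $A_i=\{x_j: j\ne i\}$ for each $i\in[4]$, so the simplex is exactly a copy of $K_4^{(3)}$ on the vertex set $\{x_1,x_2,x_3,x_4\}$. The only real obstacle, a mild one, is justifying that the $x_j$'s are pairwise distinct and lie outside their namesake set, which follows cleanly from the definitions. Chaining this with Theorem~\ref{thm:connection} completes the proof.
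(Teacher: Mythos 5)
Your proof is correct, but it takes a different route from the paper's. The paper argues directly from the definition of intersection-$3$-efficiency: given a subfamily with empty intersection, it splits into the case where some pair of sets meets in at most one element (and then finds a third set avoiding that intersection) and the case where all pairwise intersections have size two (which forces a $K_4^{(3)}$). You instead invoke Theorem~\ref{thm:connection} to reduce the proposition to a statement about simplices, and then prove the nice structural fact that in a $3$-uniform family \emph{every} $r$-simplex with $r\ge 3$ is literally a copy of $K_4^{(3)}$: the representatives $x_j\in\bigcap_{i\ne j}A_i$ are pairwise distinct and avoid their namesake sets, so $A_1$ contains $r$ distinct elements, forcing $r\le 3$ and then pinning down each $A_i$ as $\{x_1,x_2,x_3,x_4\}\setminus\{x_i\}$. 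All the individual steps check out (the distinctness of the $x_j$'s and $x_j\notin A_j$ follow exactly as you say). What your approach buys is a cleaner logical structure and the extra structural information about which simplices can live inside a $3$-uniform hypergraph; what the paper's approach buys is self-containedness at the level of the definition, closer in spirit to how one would verify intersection-efficiency by hand. Either is a valid proof of the proposition.
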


\begin{proof}
    If $\F$ is an intersection-$3$-efficient family then it is $K_4^{(3)}$-free by the definition.
    
    Let $\F$ be a $3$-uniform $K_4^{(3)}$-free hypergraph and let $\cap_{i\in[m]}A_i=\emptyset$ for $\{A_1, A_2, \ldots, A_m\} \subseteq \F$ and  an integer $m\geq 3$. 
    If there are two sets $A_i$ and $A_j$ such that $\abs{A_i\cap A_j}\leq 1$,  then there is  a set $A_{\ell}$ such that $\ell \in [m]$ and $A_{\ell}$ is disjoint from $A_i\cap A_j$ since $\cap_{i\in[m]}A_i=\emptyset$. Hence $\F$ is an intersection-$3$-efficient family.
    If for all $i,j\in[m]$ we have $\abs{A_i\cap A_j}=2$ then, let $A_1=\{a_1,a_2,b_1\}$ and $A_2=\{a_1,a_2,b_2\}$. Since the intersection $\cap_{i\in[m]}A_i$ is an empty set, there is a set not containing $a_1$ and there is another set not containing $a_2$. Those two sets are $\{a_2,b_1,b_2\}$ and $\{a_1,b_1,b_2\}$,  since all pairwise intersections have size two. A contradiction since  $A_1$, $A_2$, $\{a_2,b_1,b_2\}$ and $\{a_1,b_1,b_2\}$ is a copy of $K_4^{(3)}$.
    \end{proof}
        
%\sc{Maybe none of this should be mentioned?}
We shortly mention that this connection was some additional motivation to work on a boundary case of a problem of Tuza. 
The largest $\F \subseteq \binom{[n]}{k}$ that are intersection-$3$-efficient for values $\frac{2n}{3}\ge k>3$ turn out to be the trivial families (Theorem~\ref{thr:Helly=>StarExtr}).
The latter result does not give insight in the most basic hypergraph-Turán-problem.
Since the largest intersection-$3$-efficient family $\F \subseteq \binom{[n]}{3}$ is non-trivial, one may hope that determining the largest non-trivial intersection-$3$-efficient $\F \subseteq \binom{[n]}{k}$ where $\frac{2n}{3}\ge k>3$ is more interesting.
That question had been posed before by Tuza~\cite{Tuza93} (see question~\ref{ques:Tuza}) and is still widely open. 
%Having solved the Theorem~\ref{thr:main}

\begin{comment}
As such, determining the largest $\F \subseteq \binom{[n]}{k}$ that is intersection-$3$-efficient for values $\frac{2n}{3}\ge k>3$ may expect to be hard as well and one may hope this would give insight in the most basic hypergraph-Turán-problem. The latter seems not to be the case.
\end{comment}

The set-up of Helly graphs has also been connected with the transversal number, see e.g.~\cite{Gy78}, and has been studied for Sperner families~\cite{BD83}.
Also as an analog of $t$-intersecting families (for $t=2$), bi-Helly families have been considered~\cite{Tuza00}.
A related extension is the notion of a special simplex~\cite{FF87}.
A special $q$-dimensional simplex is a family $\{A_1,A_2, \ldots, A_{q+1}\}$ such that there exists a set $C=\{x_1,x_2,\ldots, x_{q+1}\}$ for which  $A_i \cap C= C \backslash x_i$ for every $1 \le i \le q+1$ and all $A_i \backslash C$ are disjoint.
tIn this work, we do not focus on these related versions.

Finally, we observe that there are more notions that are similar in flavor. 
A family $\F$ of sets has the $(p, q)$ property if among any $p$ sets of $\F$ some $q$ have
a nonempty intersection. This property was invented by Hadwiger and Debruner~\cite{HD57}, where they extend the result of Helly on convex sets in $\mathbb R^k.$
While the definition is stated for set families, it did not get attention in this more general framework.
A few exceptions are $p=q$ and $q=2$, stated in a different way.
A family satisfies the $(p,p)$ property precisely if it is $p$-intersecting.
When $q=2$, 
a family $\F$ of sets has the $(p, 2)$ property if $\F$ contains no $p$ disjoint sets.
Hence Kleitman~\cite{Kleitman68} studied this case already.
More general, one can ask about the maximum size of a family $\F \subseteq 2^{[n]}$ which has the $(p,q)$ property for $p> q>2.$

\begin{question}
   Given $p>q>2.$ What is the maximum size of a family $\F \subseteq 2^{[n]}$ which has the $(p,q)$ property. 
\end{question}

\begin{comment}
An example related to this is the Erd\H{o}s-Kleitman conjecture~\cite{EK74} that the size of a maximal family $\F \subseteq 2^{[n]}$ without the $(p, 2)$ property is at least $2^n-2^{n-p+1},$ with the best bound so far being the one in~\cite{BLST18}.
Estimating the size of a maximal family $\F \subseteq 2^{[n]}$ with the $(p, q)$ property for $p>q \ge 3$, 
\end{comment}

%Frankl and Kupavskii~\cite{FK21} define a family $\F \subseteq \binom{[n]}{k}$ to be $U(s,q)$ when the union of any $s$ sets in $\F$ has size at most $q$.

\subsection{Overview of content}\label{subsec:summary}

In Section~\ref{sec:overview}, we summarize the progress on maximum Helly families and families without simplices.
We end with a problem of Tuza~\cite{Tuza93} about the maximum size of a non-trivial uniform $q$-Helly family $\F \subseteq \binom{[n]}{k}$, which we solve in a boundary case $n=k\frac{q}{q-1}$.
Equivalently, we determine the largest non-trivial union-$q$-efficient family $\F \subseteq \binom{[n]}{k}$ where $n=qk.$
In Section~\ref{sec:q=2} we solve the case $k=2$ separately.
In Section~\ref{sec:largestsize_q} this is done for $k \ge 3$. 
Now we present the main result of this work.

\begin{thr}\label{thr:main}
    Let $n=qk$ where $q, k \ge 2$, $(q,k)\not=(2,2)$ and let $\F \subseteq \binom{[n]}{k}$ be a non-trivial union-$q$-efficient family. 
    Then $\lvert \F \rvert \le \binom{n-q}{k}+q.$ 
    Furthermore, the extremal family is unique up to isomorphism.
\end{thr}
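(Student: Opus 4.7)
The plan is to reduce the problem to finding a $q$-element transversal of a partition contained in $\mathcal{F}$, and then to bound the number of remaining sets via the auxiliary graph-theoretic result mentioned in the introduction.

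First, non-triviality gives $\bigcup_{A\in\mathcal{F}} A=[n]$, so by union-$q$-efficiency there exist $P_1,\ldots,P_q\in\mathcal{F}$ whose union is $[n]$; since $|P_i|=k$ and $|[n]|=qk$, these sets are pairwise disjoint and form a partition $\mathcal{P}\subseteq \mathcal{F}$. Writing $\mathcal{F}':=\mathcal{F}\setminus\mathcal{P}$, the goal reduces to showing $|\mathcal{F}'|\le \binom{n-q}{k}$.

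The central structural claim is the existence of a transversal $T=\{t_1,\ldots,t_q\}$ with $t_i\in P_i$ such that $F\cap T=\emptyset$ for every $F\in\mathcal{F}'$; this immediately gives $\mathcal{F}'\subseteq \binom{[n]\setminus T}{k}$ and closes the bound. To build such a $T$, I would argue by contradiction: if no choice works, then for some index $i$ every element of $P_i$ is covered by a set of $\mathcal{F}'$. Selecting $F^1,\ldots,F^r\in\mathcal{F}'$ with $r\le k$ covering $P_i$ and applying union-$q$-efficiency to $\{F^1,\ldots,F^r\}\cup(\mathcal{P}\setminus\{P_i\})$ (whose union is $[n]$) forces $s\ge 2$ of the $F^j$'s to partition a set of the form $P_i\cup P_{\ell_1}\cup\cdots\cup P_{\ell_{s-1}}$, yielding another $q$-partition of $[n]$ inside $\mathcal{F}$. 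Iterating this exchange generates a rich family of alternative partitions whose incidence structure should ultimately obstruct $|\mathcal{F}'|$ from being too large.

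The main technical step — and what I expect to be the principal obstacle — is to translate these alternative partitions into an auxiliary graph $H$ (on $[n]$, or on the set of potential transversal elements) whose edges encode the conflicts, and to verify that $H$ is $2$-self-centered with every pair of vertices having a common neighborhood containing a $K_{q-2}$. Applying the paper's minimum-order bound for such graphs, combined with Karamata's inequality applied to the profile sequences $(|F\cap P_j|)_{j\in[q]}$ of the sets $F\in\mathcal{F}'$, converts the graph constraint into the sharp estimate $|\mathcal{F}'|\le \binom{n-q}{k}$.

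For uniqueness, equality throughout the above argument forces $\mathcal{F}'$ to be exactly $\binom{[n]\setminus T}{k}$ and $\mathcal{P}$ to be the unique $q$-partition in $\mathcal{F}$, thereby determining the extremal family up to isomorphism. The separate treatment of $q=k=2$ in Section~\ref{sec:q=2} is presumably required because the graph-theoretic input degenerates at that small boundary.
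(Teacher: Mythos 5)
Your opening reduction is fine: non-triviality plus union-$q$-efficiency and $n=qk$ do yield a partition $\mathcal{P}=\{P_1,\dots,P_q\}\subseteq\F$, and \emph{if} a transversal $T$ avoided by every set of $\F'=\F\setminus\mathcal{P}$ existed, the bound would follow at once. But your central structural claim is false, and the fallback you sketch does not repair it. Take $q=k=3$, $n=9$, and let $\F$ consist of the rows and columns of a $3\times3$ array: $P_1=\{1,2,3\}$, $P_2=\{4,5,6\}$, $P_3=\{7,8,9\}$ together with $\{1,4,7\}$, $\{2,5,8\}$, $\{3,6,9\}$. Each element lies in exactly one row and one column, so any subfamily covering $[9]$ must contain all three rows or all three columns, hence contains a partition into $3$ sets; thus $\F$ is non-trivial and union-$3$-efficient. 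Yet for either choice of $\mathcal{P}$, every element of $P_1$ is covered by $\F'$, so no transversal of the required kind exists. Consequently your contradiction argument cannot close: ``alternative partitions'' are perfectly consistent with union-$q$-efficiency, and the assertion that iterating the exchange ``should ultimately obstruct $|\F'|$ from being too large'' is precisely the content that needs proving; nothing in the sketch supplies it. The uniqueness argument inherits the same gap.

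The paper's proof is organized around a different object. For each element $j$ it proves the local bound $|\F(j)|\le\binom{|X(j)|-2}{k-1}+1$, where $X(j)$ is the union of the members containing $j$ (the point being that $\F(j)\setminus\{A_1\}$ cannot cover all of $A_1$ for a partition class $A_1\ni j$); it then forms the graph $G$ on $[n]$ whose edges are the pairs lying in \emph{no} member of $\F$ (the complement of the $2$-shadow), shows $G$ is $2$-self-centered with a $K_{q-2}$ in every common neighborhood, invokes Theorems~\ref{thr:mge2n-3} and~\ref{thr:Kq-2inN} to bound $\sum_i d_i\ge(q-1)(2n-q)$, and applies Karamata to the degree sequence of $G$ against the majorizing sequence with $n-q$ entries $q-1$ and $q$ entries $(q-1)k$, finishing by double counting $|\F|=\sum_j|\F(j)|/k$. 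You correctly guessed from the introduction that Karamata and the $K_{q-2}$-common-neighborhood graphs are involved, but the graph is not built from alternative partitions and Karamata is not applied to the profiles $(|F\cap P_j|)_{j\in[q]}$; as written, the proposal fails at its central step.
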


The proof uses some results on self-centered (graphs with the radius equal to the diameter) graphs for which the common neighborhood of any $2$ vertices contains a $K_{q-2}.$
For $q=3,$ this shows that adding the triangle-property as a condition to the result of Buckley~\cite{Buckley79}, implies that the lower bound on the size of a $2$-self-centered graph (graph with $\rad(G)=\diam(G)=2$) goes up from $2n-5$ to $2n-3.$
Finally, we give some concluding remarks in Section~\ref{sec:conc}.

\section{Overview of results on maximum Helly families and families without simplices}\label{sec:overview}

In this section, we summarize some important theorems connected to the previously presented concepts.

\subsection{Non-uniform families of maximal size}

Milner, as mentioned in~\cite{Erdos71}, proved the following theorem.

\begin{thr}[Milner]
    Let $\F \subseteq 2^{[n]}$ be a family without $3$-simplex.
    Then $\abs{\F}\le 2^{n-1}+n.$
\end{thr}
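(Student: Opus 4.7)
The natural extremal family here is the star centered at a fixed element $x$, giving $2^{n-1}$ sets, together with the $n$ ``harmless'' extras $\emptyset$ and the singletons $\{y\}$ for $y\in [n]\setminus\{x\}$, for a total of $2^{n-1}+n$. These extras cannot occur in any $3$-simplex, since a $3$-simplex $\{A_1,A_2,A_3,A_4\}$ requires every triple of its members to share a common point; if one of the $A_i$ is $\emptyset$, every triple containing it already has empty intersection, while if $A_i=\{y\}$, then $y$ must lie in every other set, forcing $y$ into the $4$-wise intersection. So the bound, if true, is tight.

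For the upper bound my plan is a case analysis. If $\bigcap_{A\in\F}A\neq\emptyset$, then $\F$ sits inside a star and $|\F|\le 2^{n-1}$, comfortably within the target. Otherwise, extract a minimal subfamily $\F_0\subseteq\F$ whose intersection is already empty. The $3$-simplex-free hypothesis rules out $|\F_0|=4$ (which would be exactly a $3$-simplex), so either $|\F_0|\le 3$ or $|\F_0|\ge 5$. In the principal case $|\F_0|=3$, writing $\F_0=\{A_1,A_2,A_3\}$ and $P_{ij}=A_i\cap A_j$ (nonempty by minimality), I would classify each remaining $B\in\F$ by the pattern of pairs $P_{ij}$ that $B$ meets; the $3$-simplex-free condition applied to $\{A_1,A_2,A_3,B\}$ and to triples $\{A_i,A_j,B\}$ together with a fourth set should restrict these patterns sharply enough that the total count is at most $2^{n-1}+n$, with the surplus $n$ coming from at most one ``exceptional'' set per coordinate.

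The main obstacle will be the case $|\F_0|\ge 5$, where the minimal witness is itself an $(|\F_0|-1)$-simplex with a rigid structure that is not directly parametrised by three sets. I would try to either reduce to $|\F_0|=3$ (by adjoining carefully chosen sets of $\F$ to $\F_0$ and re-minimising to force a smaller empty-intersection witness) or to run a shifting argument: left-compress $\F$ coordinate-by-coordinate, verify that $3$-simplex-freeness is preserved under the shift, and then enumerate the shifted family directly, with each coordinate of $[n]$ contributing at most one set beyond the star. In either route, the precise coefficient $n$ should drop out from the $n$ coordinates, matching the $n$ singletons-plus-empty-set extras in the extremal construction.
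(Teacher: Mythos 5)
The first thing to settle is not the strategy but the statement itself: with the paper's own definition of a $q$-simplex ($q+1$ sets with empty intersection, every $q$ of which have nonempty intersection), the claim you are trying to prove is false. Consider $\F=\{A \mid 1\in A\subseteq [n]\}\cup\binom{[n]}{\le 2}$. It contains no four sets forming a $3$-simplex: any such quadruple $\{A_1,A_2,A_3,A_4\}$ has some member omitting $1$, say $A_4$, so $\abs{A_4}\le 2$; if $A_4=\emptyset$ a triple through $A_4$ has empty intersection, if $A_4=\{y\}$ the three triples through $A_4$ force $y$ into every $A_i$, and if $A_4=\{y,z\}$ then each $A_i\cap A_j$ ($i<j\le 3$) must meet $\{y,z\}$, so by pigeonhole two of these three pairs pick the same element, which then lies in all four sets. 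Yet $\abs{\F}=2^{n-1}+n+\binom{n-1}{2}$, which exceeds $2^{n-1}+n$ for all $n\ge 3$. (This is exactly the extremal $3$-Helly family of the Bollob\'as--Duchet/Mulder theorem quoted just below, and a $3$-Helly family contains no $r$-simplex for any $r\ge3$.) The bound $2^{n-1}+n$ equals $2^{n-1}+\binom{n-1}{\ge n-q}$ precisely when $q=2$, and the extremal family given after the theorem is the star together with $\binom{[n]}{\le 1}$; so Milner's result concerns families with no \emph{three} sets that pairwise intersect but have empty common intersection --- a $2$-simplex (a ``triangle'') in the paper's indexing, the ``$3$'' counting sets rather than the dimension. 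Your tightness check only verified that the stated extremal family avoids four-set simplices; it does not show that no larger family does, and indeed one does, so ``the bound, if true, is tight'' already fails under the reading you adopted. Since your entire case analysis is keyed to four-set simplices, it is aimed at an unprovable statement.

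Even after retargeting to the triangle version, what you have is a plan rather than a proof. The classification of the remaining sets $B$ by which pairs $P_{ij}$ they meet is asserted to be ``sharp enough'' but never carried out, and the case $\abs{\F_0}\ge 5$ is explicitly deferred to one of two speculative devices (re-minimisation or compression) with no verification that either preserves the hypothesis or closes the count. Note also that the structure of a minimal empty-intersection witness changes in the corrected setting: if $\abs{\F_0}=3$, minimality forces the three sets to be pairwise intersecting, which is exactly the forbidden configuration, so the cases to handle are $\abs{\F_0}=2$ (two disjoint sets) and $\abs{\F_0}\ge 4$. The paper itself offers no proof of this theorem (it is quoted from Erd\H{o}s's account of Milner's work), so there is no argument to compare against; if you wish to supply one, start from the corrected statement, for which the extremal family and the coefficient $n$ you computed are the right targets.
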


The family $\F=\{ A \mid 1\in A \subseteq [n]\} \cup \binom{[n]}{ \le 1}$ shows sharpness of this theorem. Note that by Theorem~\ref{thm:connection} we have the family $\F=2^{[n-1]} \cup \binom{[n]}{ \le n-1}$ is an extremal union-efficient family.

Bollob\'as and Duchet~\cite[Cor.~3]{BD79}, with the uniqueness statement proven in~\cite[Thr.~2]{BD83},
and Mulder~\cite[Thr.~2]{Mulder83} generalized the above theorem for $q$-Helly families.

\begin{thr}[\cite{BD79},\cite{Mulder83}]
    Let $\F \subseteq 2^{[n]}$ be a $q$-Helly family.
    Then $\abs{\F}\le 2^{n-1}+\binom{n-1}{\ge n-q}.$
    Furthermore, equality holds if and only if for some $i \in [n]$ $\F$ equals
$\{ A \mid i \in A \subseteq [n]\} \cup \binom{[n]}{ \le q-1}.$\end{thr}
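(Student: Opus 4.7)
My plan is to proceed by induction on $n$ (with $q$ fixed), using Berge--Duchet (Theorem~\ref{thr:BD75}) as the main structural tool. The base case $n \le q$ is trivial since $2^{n-1}+\binom{n-1}{\ge n-q}$ already equals $2^n$. For the inductive step, I would first replace $\F$ by a maximal $q$-Helly family containing it (this only increases $|\F|$) and assume $\F$ is non-trivial, as otherwise $\F$ is contained in a star and automatically has at most $2^{n-1}$ sets.

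The core idea is to locate a ``center'' $i \in [n]$, split $\F = \F_i \sqcup \F_{\bar i}$ where $\F_i = \{A\in\F: i\in A\}$ and $\F_{\bar i} = \{A\in\F: i\notin A\}$, take $|\F_i|\le 2^{n-1}$ for free, and prove $|\F_{\bar i}| \le \binom{n-1}{\ge n-q} = \sum_{j=0}^{q-1}\binom{n-1}{j}$. The cleanest route to the latter bound is to show that every $B \in \F_{\bar i}$ has size at most $q-1$, so that $\F_{\bar i} \subseteq \binom{[n]\setminus\{i\}}{\le q-1}$. To select $i$, I would pigeonhole on Berge--Duchet witnesses: for each $(q+1)$-set $T \subseteq [n]$ pick a common element $x_T$ guaranteed by Theorem~\ref{thr:BD75}, and let $i$ be a most frequently appearing witness.

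The key lemma is then: for this $i$, no set in $\F_{\bar i}$ has size $\ge q$. Suppose for contradiction $B \in \F$ with $i \notin B$ and $|B|\ge q$; pick a $q$-subset $B' \subseteq B$ and set $T = B' \cup \{i\}$. Applying Berge--Duchet to $T$ yields a common element $x_T$ lying in every $C \in \F$ with $|C\cap T|\ge q$. Since $B$ is such a set and $i \notin B$, we get $x_T \in B$ and in particular $x_T \ne i$. On the other hand, by maximality, many sets in $\F_i$ of the form $B'\cup\{i\}\cup S$ with $S$ ranging over subsets of $[n]\setminus(B'\cup\{i\})$ must also contain $x_T$, which drives $x_T$ into the intersection of the star-sets through $i$; since $\F$ is non-trivial this intersection collapses to $\{i\}$, contradicting $x_T \ne i$. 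For uniqueness, equality in both bounds forces $\F_i = \{A: i\in A\}$ and $\F_{\bar i} = \binom{[n]\setminus\{i\}}{\le q-1}$, reconstructing the claimed extremal family up to the choice of $i$.

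The main obstacle I anticipate is the key lemma above. Choosing the correct ``center'' $i$ and fully exploiting maximality to populate $\F_i$ with enough sets of the form $B'\cup\{i\}\cup S$ is delicate, and one may need to perform a preliminary closure of $\F$ under Berge--Duchet extensions before the Berge--Duchet-based pinning argument on $x_T$ becomes a genuine contradiction.
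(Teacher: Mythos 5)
The paper only cites this theorem (to Bollob\'as--Duchet and Mulder) and gives no proof, so I can only judge your plan on its own terms --- and it breaks at exactly the point you identify as the key lemma. The claim that for some choice of $i$ every $B\in\F$ with $i\notin B$ satisfies $\abs{B}\le q-1$ is simply false, even after passing to a maximal $q$-Helly family. Take two disjoint $q$-sets, say $A_1=[q]$ and $A_2=\{q+1,\dots,2q\}$ in $[n]$ with $n\ge 2q$: the family $\{A_1,A_2\}$ is vacuously $q$-Helly (it is not $q$-linked, so the Helly condition has nothing to check), hence it extends to a maximal $q$-Helly family $\F^*$. For every $i\in[n]$ at least one of $A_1,A_2$ avoids $i$ and has size $q$, so no pigeonholing of Berge--Duchet witnesses can produce a center $i$ with $\F^*_{\bar i}\subseteq\binom{[n]\setminus\{i\}}{\le q-1}$. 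Since that containment is your only route to $\abs{\F_{\bar i}}\le\binom{n-1}{\ge n-q}$, the decomposition does not give the bound.

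The maximality/closure step is not merely delicate but unfixable. If $B\in\F$ with $\abs{B}\ge q$ and $i\notin B$, pick $B'=\{b_1,\dots,b_q\}\subseteq B$; then the $q+1$ sets $B,\ \{i\}\cup(B'\setminus\{b_1\}),\ \dots,\ \{i\}\cup(B'\setminus\{b_q\})$ form a $q$-simplex (any $q$ of them intersect, all $q+1$ have empty intersection). So no $q$-Helly family containing $B$ can contain all the star-sets through $i$ that your pinning argument needs --- maximality works against you here, and this is precisely why the Berge--Duchet witness $x_T$ for $T=B'\cup\{i\}$ can exist without contradiction: some of those sets must be missing from $\F_i$. (The induction on $n$ is also never actually invoked in the inductive step.) A workable proof needs a different mechanism; note that the shape of the bound, $2^{n-1}+\binom{n-1}{\ge n-q}$, matches counting complementary pairs $\{A,A^c\}$ --- at most $2^{n-1}$ pairs meeting $\F$ plus a bound of $\binom{n-1}{\le q-1}$ on the pairs with both members in $\F$ --- rather than a star decomposition, and that is a more promising place to apply the $q$-Helly hypothesis.
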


It took $25$ years  to prove that the same bound holds for families without a $q$-simplex.
Keevash and Mubayi~\cite{KM10} proved the following theorem.

\begin{thr}[\cite{KM10}]
    Let $\F \subseteq 2^{[n]}$ be a family without $q$-simplex. 
    Then $\abs{\F}\le 2^{n-1}+\binom{n-1}{\ge n-q}.$
\end{thr}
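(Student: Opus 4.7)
The plan is to imitate the extremal structure $\F^* = \{A : 1 \in A\} \cup \binom{[n]}{\le q-1}$, whose size is exactly $2^{n-1} + \binom{n-1}{\le q-1} = 2^{n-1} + \binom{n-1}{\ge n-q}$, and prove that no $q$-simplex-free family can do better. The strategy is a two-stage argument: first reduce to left-compressed families via shifting, then bound $\F$ by splitting on whether sets contain a fixed pivot element, using the simplex hypothesis to control the ``non-pivot'' side.

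For the shifting reduction, apply the usual operation $S_{ij}$ with $i<j$: for each $A \in \F$ with $j \in A$, $i \notin A$ and $(A \setminus \{j\}) \cup \{i\} \notin \F$, replace $A$ by $(A \setminus \{j\}) \cup \{i\}$. A routine exchange argument shows that if $S_{ij}(\F)$ contains a $q$-simplex $\{B_1,\ldots,B_{q+1}\}$, then reversing the shift on any $B_t$ that was actually moved produces a $q$-simplex back in $\F$, so the property of being $q$-simplex-free is preserved. After iterating, one may assume $\F$ is left-compressed. Now write $\F = \F^+ \sqcup \F^-$ with $\F^+ = \{A \in \F : 1 \in A\}$ and $\F^- = \{A \in \F : 1 \notin A\}$. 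Since trivially $|\F^+| \le 2^{n-1}$, the entire task reduces to showing $|\F^-| \le \binom{n-1}{\le q-1}$, i.e.\ that every set of $\F^-$ has size at most $q-1$.

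The heart of the proof is extracting a forbidden $q$-simplex from any hypothetical $A \in \F^-$ with $|A| \ge q$. Writing $A = \{a_1 < \cdots < a_k\}$ with $k \ge q$, left-compression supplies the ``lifted'' sets $A_\ell := (A \setminus \{a_\ell\}) \cup \{1\} \in \F^+$ for $\ell = 1,\ldots,q$. Together with a further set $A' \subseteq A$ of size exactly $q$, these $q+1$ sets would form a $q$-simplex: the pairwise intersections of any $q$ of them are easily checked to be non-empty, while the total intersection is empty (the only candidate witness would be $1$, killed by $A'$, or an element of $A$, killed by all of the $A_\ell$'s). The main obstacle is exactly producing such an $A' \in \F$: left-compression alone does not guarantee that a particular $q$-subset of $A$ lies in $\F$. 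This is the real gap separating the $q$-simplex-free bound from the easy $q$-Helly bound that follows directly from Theorem~\ref{thr:BD75}. Keevash and Mubayi's resolution is to induct on $n$ with a simultaneously-phrased hypothesis on the link $\F(1) = \{A \setminus \{1\} : 1 \in A \in \F\}$ and the deletion $\F \setminus \{A : 1 \in A\}$, using the $q$-simplex-free property to couple these two subfamilies so that whenever the link is large, the deletion must be small; closing this recursion with the sharp constant is the delicate technical core, and accommodates the weaker hypothesis at the cost of substantially more work than the $q$-Helly case.
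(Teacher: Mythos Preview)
The paper does not prove this theorem at all; it is quoted as a result of Keevash and Mubayi~\cite{KM10} in the survey Section~\ref{sec:overview}, with no argument supplied. There is therefore nothing in the paper to compare your attempt against.

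As for your proposal itself, it is a sketch rather than a proof, and you say so yourself: after correctly setting up the split $\F=\F^+\sqcup\F^-$ and the candidate simplex $A_1,\ldots,A_q,A'$, you note that the existence of the required $A'\in\F$ with $A'\subseteq A$, $\lvert A'\rvert=q$ is \emph{the} obstacle, and then defer to ``Keevash and Mubayi's resolution'' without carrying it out. That is precisely the gap that separates the $q$-simplex-free bound from the easy $q$-Helly bound, so what you have written is an outline of where the difficulty lies, not a proof. One further point deserves care: you call the preservation of $q$-simplex-freeness under the shift $S_{ij}$ a ``routine exchange argument'', but a $q$-simplex is defined by a simultaneous emptiness condition ($\bigcap_t B_t=\emptyset$) and non-emptiness conditions (every $q$ of the $B_t$ intersect), and un-shifting some of the $B_t$ can in principle disturb the former even while preserving the latter; this step is not as automatic as for purely intersection-type properties and should be justified explicitly rather than asserted.
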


\subsection{Maximum uniform families without $r$-simplices are typically trivial}\label{subsec:overviewtrivialcase}

In 1974, inspired by a problem of Erd\H{o}s~\cite{Erdos71} and the Erd\H{o}s-Ko-Rado theorem~\cite{EKR61}, Chv\'{a}tal~\cite{Chvatal74} conjectured that maximum uniform families without $r$-simplices are typically trivial.
This conjecture is known as the Erd\H os-Chv\'{a}tal Simplex Conjecture.

\begin{conj}[\cite{Chvatal74}]\label{conj:Chvatal}
    Let $k > r \ge 1$ and $n \ge \frac{r+1}{r} k.$
    A family $\F \subseteq \binom{[n]}k$ without a $r$-simplex contains at most $\binom{n-1}{k-1}$ sets.
\end{conj}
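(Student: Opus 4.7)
The plan is to combine the compression/shifting technique with induction, using the Erd\H{o}s--Ko--Rado theorem as the base case $r=1$. A $1$-simplex is a pair of disjoint sets, so '$\F$ is $r$-simplex-free' for $r=1$ just means $\F$ is intersecting, and EKR gives the sharp bound $\abs{\F} \le \binom{n-1}{k-1}$ whenever $n \ge 2k = \frac{r+1}{r}k$. For $r \ge 2$, I would first apply the standard shift operators $S_{ij}$ ($1 \le i < j \le n$); these preserve $\abs{\F}$ and are well known not to create $r$-simplices, so we may assume $\F$ is left-shifted.

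With $\F$ shifted, partition $\F = \F_1 \cup \F_{\bar 1}$ according to whether $1 \in A$, and let $\F_1' = \{A \setminus \{1\} : A \in \F_1\} \subseteq \binom{[2,n]}{k-1}$. The target bound $\binom{n-1}{k-1}$ counts the $k$-sets in $[n]$ through $1$, so it is equivalent to construct an injection from $\F_{\bar 1}$ into $\binom{[2,n]}{k-1} \setminus \F_1'$. Two ingredients would drive such a construction: (a) shiftedness forces that for each $A \in \F_{\bar 1}$ many 'swap' sets $(A \setminus \{j\}) \cup \{1\}$ lie in $\F_1$, so the potential image points are numerous; (b) simplex-freeness controls how these swap-sets interact with $A$, since any configuration of $A$ together with $r$ carefully chosen members of $\F_1$ whose pairwise intersections drop distinct coordinates would reconstitute an $r$-simplex. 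Converting (a) and (b) into a matching between $\F_{\bar 1}$ and $\binom{[2,n]}{k-1} \setminus \F_1'$, via Hall's theorem or a direct greedy procedure guided by the shifting order, would then close the inductive step.

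The main obstacle --- the reason the Erd\H{o}s--Chv\'{a}tal simplex conjecture has remained open for half a century --- is the boundary regime $n = \frac{r+1}{r}k$. When $n$ is substantially larger than this threshold, the delta-system method of Frankl and F\"uredi, and the junta method underlying the proof of Theorem~\cite{KM10} for non-uniform families, both succeed, because the star dominates all other near-extremal candidates by a multiplicative factor. At the tight threshold, however, competing near-extremal families become comparable to the star in size up to lower-order terms, so the slack which an inductive argument typically exploits vanishes. A realistic attack would therefore need a precise structural/stability classification of shifted $r$-simplex-free families whose size is within $o(\binom{n-1}{k-1})$ of the star bound, and it is this classification step, not the shifting or the base case, where I expect the plan to stall.
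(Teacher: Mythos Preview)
The statement you were given is labeled in the paper as a \emph{Conjecture}, not a theorem: it is the Erd\H{o}s--Chv\'atal Simplex Conjecture, and the paper does not prove it. The surrounding text merely surveys partial progress (Chv\'atal for $k=r+1$, Mubayi--Verstra\"ete for $r=2$, Frankl--F\"uredi and Keller--Lifshitz for $n$ large, Currier for $n \ge 2k-r+2$). There is therefore no proof in the paper to compare your proposal against.

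Your write-up is not really a proof either, and to your credit you say so yourself: you outline the standard shifting-plus-induction opening, then correctly identify the boundary regime $n=\frac{r+1}{r}k$ as the place where all known techniques lose their slack and state that you ``expect the plan to stall'' there. That diagnosis matches the state of the art. One caution on a detail you treat as routine: the claim that the shift operators $S_{ij}$ ``are well known not to create $r$-simplices'' is not as innocent as the intersecting ($r=1$) case; preservation of simplex-freeness under shifting is itself a nontrivial lemma and part of why the general conjecture resists the EKR template. In short, your proposal is an accurate summary of why the problem is open, not a proof, and the paper offers none to compare it with.
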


%In the period where the terminology still had to be decided, a $k$-uniform family on $[n]$ without $r$-simplex has also been called a $(n,k,r)$-hypergraph, e.g. in~\cite{BF77}.

The theorem of Erd\H{o}s-Ko-Rado~\cite{EKR61} can be formulated in this form.

\begin{thr}[\cite{EKR61}]
    A family $\F \subseteq \binom{[n]}k$ without a $1$-simplex contains at most $\binom{n-1}{k-1}$ sets.
\end{thr}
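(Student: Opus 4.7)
The statement is the classical Erd\H{o}s--Ko--Rado theorem. A $1$-simplex is just a pair of disjoint sets, so the hypothesis ``$\F$ contains no $1$-simplex'' is precisely that $\F$ is an intersecting family. I would prove the bound by Katona's cyclic permutation method, which gives the cleanest path and makes transparent where the (implicit) hypothesis $n\ge 2k$, inherited from the $r=1$ case of Conjecture~\ref{conj:Chvatal}, is used.

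The key combinatorial step is to fix a cyclic arrangement $\sigma$ of $[n]$ on a circle and bound how many members of $\F$ appear as an \emph{arc} (a block of $k$ consecutive positions). The plan is to show that at most $k$ members of $\F$ can be arcs of $\sigma$. Pick any arc $A_{1}\in\F$ and normalise it to $A_{1}=\{1,\ldots,k\}$. Every other arc in $\F$ must meet $A_{1}$, so its starting position lies in $\{2,\ldots,k\}$ (``left arcs'') or in $\{n-k+2,\ldots,n\}$, which we relabel as $\{n-k+j':j'\in\{2,\ldots,k\}\}$ (``right arcs''). A short computation using $n\ge 2k$ shows that a left arc starting at $j$ and a right arc starting at $n-k+j'$ are disjoint precisely when $j\ge j'$. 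Hence the sets of left-indices and of right-indices, both viewed inside $\{2,\ldots,k\}$, must be disjoint subsets of a $(k-1)$-set; together with $A_{1}$ this yields at most $(k-1)+1=k$ arcs.

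The proof then concludes by a standard double count. Over all $n!$ linear orders of $[n]$, a fixed $A\in\F$ is an arc in exactly $n\cdot k!\cdot(n-k)!$ of them, while each order contributes at most $k$ arcs from $\F$, so
\[
|\F|\cdot n\cdot k!\cdot(n-k)!\;\le\;k\cdot n!,
\]
which simplifies to $|\F|\le\binom{n-1}{k-1}$. The only truly delicate point is the bounded-arc lemma, and it is also the unique step where the hypothesis $n\ge 2k$ is genuinely needed, so it is the place I would be most careful; the rest is routine counting.
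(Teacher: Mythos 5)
The paper does not prove this statement at all: it is the classical Erd\H{o}s--Ko--Rado theorem, quoted with a citation to [EKR61] and reformulated in the simplex language (with the hypothesis $n\ge 2k$ inherited from the range $n\ge\frac{r+1}{r}k$ of the surrounding Conjecture, as you correctly note). Your Katona cycle argument is a legitimate, self-contained proof and the overall architecture (bound the arcs per cyclic order by $k$, then double count) is sound, but one intermediate claim is stated incorrectly. With $A_1=\{1,\dots,k\}$, the left arc $\{j,\dots,j+k-1\}$ and the right arc $\{n-k+j',\dots,n\}\cup\{1,\dots,j'-1\}$ are disjoint if and only if $j\ge j'$ \emph{and} $j-j'\le n-2k$; the second condition is vacuous only when $n\ge 3k-2$, and in the range $2k\le n<3k-2$ there are pairs with $j>j'$ whose arcs still meet (for $n=6$, $k=3$: the arcs $\{3,4,5\}$ and $\{5,6,1\}$ have $j=3>j'=2$ yet share the element $5$), so ``disjoint precisely when $j\ge j'$'' is false. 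Fortunately your deduction only needs the one instance $j=j'$: those two arcs occupy $2k\le n$ consecutive positions and hence are genuinely disjoint, which already forces the set of left-indices and the set of right-indices to be disjoint subsets of the $(k-1)$-element set $\{2,\dots,k\}$, giving at most $k$ arcs of $\F$ per cyclic order. The concluding double count ($|\F|\cdot n\cdot k!\cdot(n-k)!\le k\cdot n!$, i.e.\ $|\F|\le\frac{k}{n}\binom{n}{k}=\binom{n-1}{k-1}$) is correct. So the proof goes through once you replace the ``precisely when $j\ge j'$'' lemma by the correct (and weaker) statement you actually use.
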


If one forbids all simplices of size at least $q,$ instead of only the simplices of size $q$, the problem is easier.
In \cite[Thr.~1]{Mulder83}, Mulder proved the upper bound for $q$-Helly families.
\begin{thr}[\cite{Mulder83}]\label{thr:Helly=>StarExtr}
    Let $\F \subseteq \binom{[n]}k$ be a $q$-Helly family, where $k>q$.
    Then $\abs{\F}\le \binom{n-1}{k-1}$ and equality is attained only if $\F$ is a trivial family
\end{thr}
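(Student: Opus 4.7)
The plan is to argue by contrapositive: assume $\F$ is a non-trivial $q$-Helly family and derive $\abs{\F}<\binom{n-1}{k-1}$, which gives both the bound and the uniqueness of the star. First I would translate the hypothesis into two usable forms: $\F$ is intersection-$q$-efficient (Theorem~\ref{thm:connection}), and for every $(q+1)$-set $A\subseteq[n]$ the ``near-$A$'' subfamily $\{F\in\F:\abs{F\cap A}\ge q\}$ has non-empty common intersection (Theorem~\ref{thr:BD75}). These will be the two workhorse tools.

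Next I would extract a small witness to non-triviality. Because $\bigcap_{F\in\F}F=\emptyset$ and $\F$ is intersection-$q$-efficient, there is a minimal subfamily $\F^{\ast}=\{F_{1},\ldots,F_{s}\}\subseteq\F$ with empty intersection. Minimality means every proper subfamily has non-empty intersection, so $\F^{\ast}$ is an $(s-1)$-simplex; combined with the ``no $r$-simplex for $r\ge q$'' formulation of Theorem~\ref{thm:connection} this forces $s-1<q$, hence $s\le q<k$. Picking $x_{j}\in\bigcap_{i\ne j}F_{i}\setminus F_{j}$ yields $s$ distinct points $X=\{x_{1},\ldots,x_{s}\}$ whose trace on each $F_{i}$ is exactly $X\setminus\{x_{i}\}$; this skeleton $X$ is the structural object around which the counting will be organized.

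The main combinatorial work, which I expect to be the hard part, is to use $X$ to classify every $F\in\F$. Applying Theorem~\ref{thr:BD75} to $(q+1)$-sets of the form $X\cup Y$ with $Y\subseteq[n]\setminus X$ of size $q+1-s$ produces Berge--Duchet points $\phi(X\cup Y)$ lying in every $F\in\F$ that traces at least $q$ points on $X\cup Y$. Varying $Y$ pins down a small list of candidate ``centers'' and forces a dichotomy: either $F$ is captured by one of these centers (and hence lies inside a partial star at that center), or $F$ lies in one of the faces $\binom{[n]\setminus F_{i}}{k}$ for some $i\in[s]$. The main obstacle is that the naive union-of-faces bound is roughly $s\binom{n-k}{k}$, which badly overshoots $\binom{n-1}{k-1}$; to close the gap I would induct on $n$, exploiting that each face is itself a $q$-Helly family on a strictly smaller ground set, so the induction hypothesis yields the star bound on each face, and the star-type contributions on the other side glue together into a single global star. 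The equality case is then forced: the only way the combined bound is tight is when each inductive step realizes the extremal star, which contradicts the non-triviality assumption and leaves the full star at a single vertex as the unique extremizer.
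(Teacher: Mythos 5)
The paper does not actually prove this statement --- it is quoted from Mulder~\cite{Mulder83} --- so there is no internal proof to compare against; I can only assess your plan on its own terms. Your setup is sound: splitting into the trivial case (where the star bound is immediate) and the non-trivial case (where one must show \emph{strict} inequality), extracting a minimal empty-intersection subfamily $\F^{\ast}=\{F_1,\dots,F_s\}$, noting via Theorem~\ref{thm:connection} that $s\le q$, and choosing the witnesses $x_j\in\bigcap_{i\ne j}F_i\setminus F_j$ are all correct steps. But the argument has a genuine gap exactly where you flag ``the hard part.'' The dichotomy you assert --- every $F\in\F$ either contains one of a ``small list of candidate centers'' obtained from Theorem~\ref{thr:BD75}, or lies in some face $\binom{[n]\setminus F_i}{k}$ --- is not justified and is doing all the work. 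The Berge--Duchet criterion only constrains those $B\in\F$ with $\abs{B\cap(X\cup Y)}\ge q$, i.e.\ sets containing all but one element of a specific $(q+1)$-set; a typical $F\in\F$ meets $X\cup Y$ in far fewer than $q$ points for every choice of $Y$, so varying $Y$ tells you nothing about it. Already for $q=2$, $s=2$ one can have $F$ meeting both $F_1$ and $F_2$ in a single point each and containing no distinguished ``center,'' so the claimed trichotomy cannot be read off from the local BD75 conclusions without a substantial additional argument.

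Even granting the dichotomy, the counting does not close as described. By induction each face contributes at most $\binom{n-k-1}{k-1}$ sets, so the faces alone contribute up to $s\binom{n-k-1}{k-1}$, and adding any star-type contribution of order $\binom{n-1}{k-1}$ overshoots the target $\binom{n-1}{k-1}$ unless all the pieces are shown to sit inside a \emph{single} star --- which is precisely the conclusion you are trying to prove, not something the induction hands you. Relatedly, your sketch never uses the hypothesis $k>q$ beyond the cosmetic remark $s\le q<k$; since the theorem is false without some such condition, a correct proof must invoke it at a load-bearing point, and its absence from your argument is a further sign that the mechanism is missing. To salvage the approach you would need to prove a concrete structural lemma (for instance, that the BD75 points $\phi(X\cup Y)$ coincide for all $Y$, or that the faces are pairwise disjoint and each strictly smaller than its inductive bound) before any of the counting can go through.
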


Chv\'{a}tal~\cite{Chvatal74}  proved the $k=r+1$ case of Conjecture~\ref{conj:Chvatal}. The case $r=2$, which was the initial problem of Erd\H{o}s, was proven only $30$ years later by Mubayi and Verstra\"ete~\cite{MV05}.
Here they considered hypergraphs without a non-trivial intersecting sub(hyper)graph of size $q+1$.
Here it is interesting to note that in that set-up~\cite[Thr.3]{MV05}, for $k=3$, if the size of the non-trivial intersecting family has a large size $q+1 \ge 11$ the star is not extremal.
Liu~\cite{Liu22+}~proved that the star is still extremal if $k>3$ and $n$ is sufficiently large.

Conjecture~\ref{conj:Chvatal} was proven to be true provided that $n$ is sufficiently large in terms of $k,r$ by Frankl and Furedi~\cite{FF87} and in terms of only $r$ by Keller and Lifshitz~\cite{KL21}.
Currier~\cite{Currier21} proved it when $n \ge 2k-r+2.$
For more insights on the history of Conjecture~\ref{conj:Chvatal} and related problems, we refer the reader to~\cite[Sec.~6.4]{liu2022extremal}.

\subsection{Maximum non-trivial Helly families}

Since the extremal families are the trivial ones (Theorem~\ref{thr:Helly=>StarExtr}), it is natural to wonder what happens with non-trivial families.

\begin{question}[\cite{Tuza93}]\label{ques:Tuza}
    What is the maximum possible size of a non-trivial $k$-uniform $q$-Helly family ${\F \subseteq \binom{[n]}{k}}$?
\end{question}

Tuza~\cite[Thr.~1.5]{Tuza94} solved this question for $q=2$ provided that $n>>k.$

\begin{thr}[\cite{Tuza94}]
    For $n$ sufficiently large in terms of $k$, a non-trivial Helly family $\F \subseteq \binom{[n]}{k}$ satisfies 
    $$\abs{\F} \le \binom{n-k-1}{k-1}+\binom{n-2}{k-2}+1.$$
    Furthermore, the extremal family is unique (up to isomorphism).
\end{thr}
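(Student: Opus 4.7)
The plan is to reduce the general problem to a rigid ``star plus one extra disjoint set'' configuration matching the extremal example. That example is obtained by fixing a $k$-set $B\subseteq[n]$ together with elements $x\in[n]\setminus B$ and $b\in B$, and letting $\F$ consist of $B$, all $\binom{n-2}{k-2}$ $k$-sets containing $\{x,b\}$, and all $\binom{n-k-1}{k-1}$ $k$-sets containing $x$ and disjoint from $B$. One checks directly that this $\F$ is non-trivial (no single element lies in every member) and Helly: any pairwise intersecting subfamily containing $B$ must exclude the sets disjoint from $B$, leaving members that all contain $b$, while any pairwise intersecting subfamily avoiding $B$ consists of sets all containing $x$.

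Starting from an arbitrary non-trivial Helly family $\F$, the Helly property applied to $\F$ itself already produces two disjoint members. The key substantive step I would pursue is a \emph{structural reduction}: for $n$ sufficiently large, some $B\in\F$ satisfies that $\F\setminus\{B\}$ is trivial, i.e., admits a common element $x$, which by non-triviality of $\F$ must satisfy $x\notin B$. Granting this reduction, consider the subfamily
\[
\F' = \{B\}\cup\{F\in\F : x\in F\text{ and }F\cap B\neq\emptyset\},
\]
which is pairwise intersecting because any two $x$-containing members share $x$ while each such $F$ meets $B$ by construction. Helly forces a common element in $\cap\F'\subseteq B$, say $b\in B$. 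Therefore every $F\in\F\setminus\{B\}$ either contains the pair $\{x,b\}$, or contains $x$ while being disjoint from $B$, which immediately yields $\lvert\F\rvert\le\binom{n-2}{k-2}+\binom{n-k-1}{k-1}+1$.

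The hard part is establishing the structural reduction. In the alternative case where $\F\setminus\{B\}$ is non-trivial for \emph{every} $B\in\F$, the family contains many disjoint pairs and one must show $\lvert\F\rvert$ falls strictly below the target bound. I would approach this via a stability argument leveraging Theorem~\ref{thr:BD75}: for each $3$-element $T\subseteq[n]$, the members $F\in\F$ with $\lvert F\cap T\rvert\ge 2$ share a common element $f(T)$. Non-triviality forces the values $f(T)$ to spread across many vertices; one then extracts a bounded ``core'' of vertices such that all but a bounded number of members of $\F$ contain a fixed element of the core, and applies Theorem~\ref{thr:Helly=>StarExtr} to each resulting trivial sub-family on a restricted ground set. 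The hypothesis $n$ large in terms of $k$ enters precisely to ensure the resulting error terms are negligible compared to $\binom{n-k-1}{k-1}$.

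For uniqueness, equality in the counting of the second paragraph forces every $k$-set containing $\{x,b\}$ and every $k$-set containing $x$ and disjoint from $B$ to actually lie in $\F$. Together with the single extra set $B$, this pins $\F$ down completely, and any two valid choices of the triple $(x,b,B)$ are related by a permutation of $[n]$, establishing uniqueness up to isomorphism.
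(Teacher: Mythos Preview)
The paper does not prove this theorem. It appears in Section~\ref{sec:overview} as a cited result of Tuza~\cite{Tuza94}, with no proof or proof sketch given; the paper merely quotes it as part of the literature survey before moving on to its own contribution (Theorem~\ref{thr:main}). So there is nothing in the paper to compare your argument against.

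As for the proposal itself: the deduction \emph{from} the structural reduction (that some $B\in\F$ has $\F\setminus\{B\}$ trivial) \emph{to} the bound is correct and is essentially the standard way to read off the extremal configuration. But the structural reduction is the entire content of the theorem, and you do not prove it. Your sketch for the alternative case (``$\F\setminus\{B\}$ non-trivial for every $B$'') via Theorem~\ref{thr:BD75} is not a proof: the assertions that the values $f(T)$ ``spread across many vertices'' and that one can ``extract a bounded core'' are not justified, and it is unclear how invoking Theorem~\ref{thr:Helly=>StarExtr} on restricted ground sets would yield a bound strictly below $\binom{n-k-1}{k-1}+\binom{n-2}{k-2}+1$ rather than merely $\binom{n-1}{k-1}$. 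In short, you have correctly identified where the difficulty lies and handled the easy half, but the hard half remains a plan rather than an argument.
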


\section{Maximum size graphs for which every spanning subgraph has a perfect matching}\label{sec:q=2}

%In this section, we determine the maximum size of a non-trivial uniform $q$-Helly family $\F \subseteq \binom{[n]}{2}$ where $n=2q$.

In this section, we prove the $k=2$ case of Theorem~\ref{thr:main} (the $k\ge3$ case will be handled with a different strategy in Section~\ref{sec:largestsize_q}.
This case can be stated completely with basic terminology in graph theory; spanning subgraphs and perfect matchings.
We first prove the case where the graph is connected.

\begin{thr}\label{thm:spanning_matching}
    Let $n=2q$ and $G$ be a connected graph of order $n$ for which every spanning (not necessarily connected) subgraph $H$ has a perfect matching.
    Then the maximum size of $G$ equals $1$ or $4$ if $q \in \{1,2\},$
    or $\binom{q+1}2$ if $q \ge 3.$ Furthermore, the extremal graph is unique.
\end{thr}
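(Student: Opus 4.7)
The plan is to translate the hypothesis graph-theoretically: writing $\F = E(G)$, non-triviality plus union-$q$-efficiency become ``$G$ has no isolated vertex and every edge cover of $G$ contains a perfect matching.'' Since $E(G)$ is itself a cover, $G$ admits a perfect matching $M = \{a_ib_i : i \in [q]\}$, with the labeling to be fixed below. My goal is to show that, after a suitable labeling, every $b_i$ is a leaf of $G$ and every non-matching edge joins two vertices of $A := \{a_1, \ldots, a_q\}$. This immediately yields $|E(G)| \le q + \binom{q}{2} = \binom{q+1}{2}$, with equality iff $G$ is the graph obtained from $K_q$ by attaching a pendant to each of its vertices.

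The central structural step is a neighborhood lemma: for every non-matching edge $xy$ with $M$-partners $x'$ and $y'$ (so $xx', yy' \in M$), one has $N(x') \subseteq \{x, y, y'\}$ and symmetrically $N(y') \subseteq \{x, y, x'\}$. To prove it, I would argue by contradiction: if $z \in N(x') \setminus \{x, y, y'\}$ with $M$-partner $z'$, then
\[
C := (M \setminus \{xx'\}) \cup \{xy, x'z\}
\]
is an edge cover of $G$ with no perfect matching. Indeed, $z'$'s only $C$-edge is $zz'$, forcing the pairing $z \leftrightarrow z'$ in any matching within $C$; but $x'$'s only $C$-edge is $x'z$, which also requires $z$, leaving $x'$ unmatchable. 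This contradicts the assumed union-$q$-efficiency.

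By intersecting the lemma's constraints at any vertex $v$ with two non-matching neighbours in distinct $M$-edges, the $M$-partner of $v$ is forced to be a leaf. Hence each $M$-edge is either \emph{Type I} (one leaf endpoint and one endpoint of degree $\ge 3$), or \emph{Type II} (both endpoints of degree $\le 2$). I label Type I edges so the heavy vertex is $a_i$. A further invocation of the lemma shows that a cross edge $a_i \sim b_j$ either forces $a_j$ to be a leaf (permitting a relabel swap of $e_j$) or creates a triangle $\{a_i, a_j, b_j\}$ excluded by an analogous bad-cover construction (omit $e_j$ and route $a_j, b_j$ through $a_i$). Thus non-matching neighbours of a heavy $a_i$ all lie in $A$, so $\deg a_i \le q$ and $d_i := \deg a_i + \deg b_i - 2 \le q - 1$ for Type I, while trivially $d_i \le 2$ for Type II. Writing $t_I, t_{II}$ for the numbers of each and using $\sum_i d_i = 2(|E(G)|-q)$ together with $t_I + t_{II} = q$, one obtains
\[
2(|E(G)|-q) \le (q-1) t_I + 2 t_{II} = q(q-1) + (3-q)t_{II} \le q(q-1)
\]
for $q \ge 3$, giving the desired $|E(G)| \le \binom{q+1}{2}$. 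Equality pins $G$ down to the claimed corona structure (for $q = 3$ one extra application of the bad-cover construction rules out Type II edges at equality). The main obstacle is this relabelling and bad-cover analysis handling cross and Type II edges; the cases $q \in \{1, 2\}$, where $\binom{q+1}{2}$ is not tight, are resolved by direct enumeration of connected graphs on $2$ and $4$ vertices, producing $K_2$ and $C_4$ respectively as the unique extremal examples.
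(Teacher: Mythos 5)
Your overall strategy is sound and rests on the same engine as the paper's proof, namely exhibiting an explicit spanning subgraph (edge cover) with no perfect matching to forbid a configuration; your neighbourhood lemma and the bad covers behind it are correct. The one concrete gap is in the deduction ``hence each $M$-edge is either Type I or Type II.'' Your intersection argument forces the $M$-partner of $v$ to be a leaf only when $v$ has two non-matching neighbours lying in \emph{distinct} $M$-edges. If $\deg(v)=3$ and the two non-matching neighbours of $v$ are exactly the two endpoints $y,y'$ of a single matching edge, the lemma only yields $N(v')\subseteq\{v,y,y'\}$, so $v'$ may a priori have degree up to $3$; the edge $vv'$ is then neither Type I nor Type II, and the per-edge bounds $d_i\le q-1$ and $d_i\le 2$ on which your count relies no longer apply to it (for $q=4$, say, such an edge could contribute $d_i=4>q-1$). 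The configuration is in fact impossible, but you need an extra line to see it: applying the lemma to both non-matching edges $vy$ and $vy'$ confines all of $N(v),N(v'),N(y),N(y')$ to $\{v,v',y,y'\}$, so these four vertices would form a connected component, contradicting connectivity once $n\ge 6$. With that patch, and with the deferred $q=3$ equality analysis actually carried out (both $C_6$ and the configuration with one vertex of degree $3$ and two Type II edges meet the bound numerically and must be excluded by explicit bad covers), your argument goes through, including the relabelling step, which I checked is consistent.

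For comparison, the paper's route is shorter: using connectivity at the outset, it shows directly that every matching edge has a leaf endpoint (via three bad-cover constructions of exactly your flavour), so $G$ is immediately a subgraph of $K_q$ with a pendant at each vertex, and the bound $\binom{q+1}{2}$ together with uniqueness follows in one line, with no Type II bookkeeping and no separate equality analysis. Your lemma is a nice uniform packaging of the same constructions, but the payoff is partly offset by the heavier counting and case analysis at the end.
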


\begin{proof}
    For $q=1$ we are trivially done.
    For $q=2$ the graph does not contain a vertex adjacent to the rest of the vertices, therefore the maximum degree is two and we have at most four edges. Note that the bound is tight since a cycle of length four has the desired properties.
    
    Let us assume $q \ge 3$.
    Let $M=\{u_iv_i\}_{1 \le i \le q}$ be a perfect matching of $G$ (which exists by choosing $H=G$).
    We claim that for every edge of $M$, one of the two vertices is a leaf.
    \begin{claim}
        For every edge $u_iv_i \in M,$ one of its end-vertices $u_i, v_i$ is a leaf.
    \end{claim}
    \begin{claimproof}
       Without loss of generality, we may assume $i=1$. 
       Since $G$ is a connected graph, $u_1$ or $v_1$ is adjacent to  a vertex of $G$ different from $v_1$ and $u_1$.  Without loss of generality, we may assume that $u_1$ is adjacent to $u_2.$
        Now we will prove that $v_1$ is a leaf.
        The vertex $v_1$ is not adjacent to vertex $u_2$. Since otherwise the edge set $\{u_2v_1, u_2v_2, u_2u_1\} \cup \{u_iv_i\}_{3 \le i \le q}$ spans a graph $G$ without a perfect matching.
        The vertex $v_1$ is not adjacent to any vertex with an index greater than two. Since otherwise, let us assume  without loss of generality that $v_1$ is adjacent with  $v_3$, then  $v_1v_3 \in E(G),$ then $\{u_2v_2, u_2u_1, v_1v_3, v_3u_3\} \cup \{u_iv_i\}_{4 \le i \le q}$ spans   graph $G$ without a perfect matching.
        Finally, if $v_1$ is adjacent to the vertex $v_2$ then by the previous argument  none of the vertices $v_1,v_2,u_1,u_2$ vertices is adjacent to  a vertex with index larger than $2$, a contradiction since $G$ is a connected graph with more than four vertices.
    \end{claimproof}
    Since the graph has $q$ leaves, we note that $G$ is a subgraph of a clique $K_q$ with a pendent vertex for each vertex of $K_q$.
    The latter graph has size $\binom{q+1}{2}$ and every spanning subgraph contains a perfect matching.
\end{proof}

As a corollary, we derive Theorem~\ref{thr:main} for $k=2.$

\begin{cor}
    Let $n=2q$ and $\F \subseteq \binom{[n]}{2}$ be a non-trivial union-$q$-efficient family.
    Then $\abs{\F} \le \binom{q+1}{2}$ whenever $q \ge 3.$
\end{cor}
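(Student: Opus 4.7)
The plan is to translate the set-family statement into the language of Theorem~\ref{thm:spanning_matching}. I would identify $\F \subseteq \binom{[n]}{2}$ with the edge set of a graph $G$ on vertex set $[n]$. Non-triviality ($\cup_{A \in \F} A = [n]$) says precisely that $G$ has no isolated vertex. A subfamily $\{A_1,\dots,A_m\}\subseteq \F$ with $\cup_i A_i = [n]$ corresponds to a set of edges covering every vertex; a $q$-element sub-subfamily still covering $[n]=2q$ vertices must be a perfect matching. Hence the union-$q$-efficient hypothesis becomes: every edge subset of $G$ that covers $V(G)$ contains a perfect matching of $G$. When $G$ is connected, Theorem~\ref{thm:spanning_matching} then immediately gives $\abs{\F}\le \binom{q+1}{2}$.

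The substantive work is the disconnected case. First, since $E(G)$ itself covers $V(G)$, it contains a perfect matching, and each edge of a perfect matching lies entirely inside one component; so every component $G_i$ has an even order, say $n_i=2q_i$, with $\sum_i q_i=q$. Next, I would show that each component $G_i$ inherits the ``covering-implies-matching'' property with parameter $q_i$: given any covering edge set $E_i'\subseteq E(G_i)$, extend it to a covering set of $G$ by adjoining, for each $j\neq i$, a perfect matching of $G_j$ (which exists by the observation above). The perfect matching of $G$ guaranteed by the union-$q$-efficient property then restricts to a perfect matching of $G_i$ contained in $E_i'$. Applying Theorem~\ref{thm:spanning_matching} to each $G_i$ yields $\abs{E(G_i)}\le f(q_i)$, where $f(1)=1$, $f(2)=4$, and $f(q_i)=\binom{q_i+1}{2}$ for $q_i\ge 3$.

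To finish, I would perform a short arithmetic check that $\sum_i f(q_i)\le \binom{q+1}{2}$ whenever $\sum_i q_i=q\ge 3$, with equality forcing a single component. This is the only real obstacle I anticipate, and it is mechanical rather than conceptual: the key identity $\binom{q+1}{2}-\binom{q-1}{2}=2q-1$ (together with $\binom{q+1}{2}-\binom{q}{2}=q$) shows, via the convexity of $q\mapsto\binom{q+1}{2}$, that merging any two summands $q_i,q_j\ge 1$ never decreases the total bound; the two exceptional values $f(1)=1$ and $f(2)=4$ (the latter being larger than $\binom{3}{2}$) are absorbed by the slack $2q-5\ge 1$ available once $q\ge 3$. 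This yields $\abs{\F}\le \binom{q+1}{2}$ in all cases.
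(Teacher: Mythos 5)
Your proof is correct and follows essentially the same route as the paper: identify $\F$ with the edge set of a graph, invoke Theorem~\ref{thm:spanning_matching} in the connected case, and settle the disconnected case component by component using the superadditivity of $q \mapsto \binom{q+1}{2}$ together with the exceptional value $f(2)=4$. You are in fact a bit more careful than the paper, which dispatches the disconnected case ``by induction'' and a list of inequalities without justifying that each component inherits the covering-implies-matching property; your padding argument (extending a covering of one component by perfect matchings of the others) supplies exactly that missing detail.
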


\begin{proof}
    Note that a family $\F \subseteq \binom{[n]}{2}$ corresponds with the edge-set of a graph $G$.
    Being union-$q$-efficient implies here that any spanning graph contains a perfect matching. Thus if $G$ is connected we are done by Theorem~\ref{thm:spanning_matching}.  If $G$ is  not connected then we are done by induction since the following inequalities hold
    $\binom{q_1+q_2+1}{2} > \binom{q_1+1}{2}+\binom{q_2+1}{2}, \binom{q_1+2}{2}>\binom{q_1+1}{2}+1$, $\binom{q_1+3}{2}>\binom{q_1+1}{2}+4$, $\binom{4+1}{2}>4+4$ and $\binom{3+1}{2}>4+1.$
\end{proof}

Remark that the maximum size of a family $\F \subseteq \binom{[n]}{n-2}$ without $q$-simplex is different for $n=2q\ge 10.$
Let $\F^c$ be the graph $K_{n-4}$ with four additional vertices connected to the same vertex of the $K_{n-4}$. This graph has $\binom{n-4}{2}+4>\binom{q+1}{2}$ edges,
while there are no $q+1$ edges spanning all vertices of the graph and thus in $\F$ there is no $q$-simplex.
This indicates the clear difference between forbidding a $q$-simplex and forbidding all $r$-simplices with $r \ge q.$

\begin{comment}
    The maximum size of a family $\F \subseteq \binom{[n]}{n-2}$ without $q$-simplex is different for $n=2q>4.$
The graph $K_{n-2}$ with two additional vertices connected to the same vertex of the $K_{n-2}$, has $\binom{n-2}{2}+2>\binom{q+1}{2}$ edges, while it has no perfect matching and as such its complement family contains no $q$-simplex.
\end{comment}

\begin{comment}
\begin{prop}
    For $ q \ge 5$, the largest non-trivial $q$-simplex free family $\F \subseteq \binom{[n]}{2}$, where $n=2q$, has size $\binom{n-2}{2}+2$.
\end{prop}
\begin{proof}
    In this case such a family corresponds with a spanning graph of maximum size without a perfect matching, which is a $K_{n-2}$ with two additional vertices connected to the same vertex of the $K_{n-2}$.
    %\footnote{ \url{math.stackexchange.com/questions/47795/number-of-edges-required-to-have-a-perfect-matching}, post of Yong Hao Ng.}
\end{proof}
\end{comment}

\section{Largest non-trivial union-$q$-efficient families}\label{sec:largestsize_q}

In this section, we prove Theorem~\ref{thr:main} which we restate for the convenience of the reader.

\begin{theorem*}
    Let $n=qk$ where $k \ge 3$ and let $\F \subseteq \binom{[n]}{k}$ be a non-trivial union-$q$-efficient family. 
    Then $\lvert \F \rvert \le \binom{n-q}{k}+q.$ 
    Furthermore, the extremal family is unique up to isomorphism.
\end{theorem*}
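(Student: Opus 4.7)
\emph{Setup.} Since $\F$ is non-trivial, $\bigcup_{A \in \F} A = [n]$, so by union-$q$-efficiency there exist $q$ sets $S_1, \ldots, S_q \in \F$ with $\bigcup_i S_i = [n]$. As $\sum_i |S_i| = qk = n$, these must partition $[n]$. Setting $\F_0 := \F \setminus \{S_1, \ldots, S_q\}$, the theorem reduces to $|\F_0| \le \binom{n-q}{k}$, with equality iff $\F_0 = \binom{[n] \setminus T}{k}$ for some transversal $T = \{t_1, \ldots, t_q\}$, $t_i \in S_i$. For each $A \in \F_0$ record its type $\tau(A) = (|A \cap S_1|, \ldots, |A \cap S_q|)$; since $A \ne S_j$ for all $j$, every coordinate lies in $\{0, 1, \ldots, k-1\}$ and the coordinates sum to $k$.

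\emph{Existence of a clean transversal via structural rigidity.} The heart of the proof is to produce a transversal $T$ (with $t_i \in S_i$) disjoint from every $A \in \F_0$; this suffices since then $\F_0 \subseteq \binom{[n] \setminus T}{k}$. Suppose not: some block, say $S_1$, is entirely covered by $\F_0$, so for each $x \in S_1$ one may pick $A_x \in \F_0$ with $x \in A_x$. A direct cardinality check shows that $\{A_x, S_2, \ldots, S_q\}$ cannot cover $[n]$ unless $A_x = S_1$, which is excluded. Hence the collection $\{A_x\}_{x \in S_1} \cup \{S_2, \ldots, S_q\}$ covers $[n]$ but no ``$q-1$ partition sets plus one $A_x$'' subfamily does; chasing union-$q$-efficiency forces some pair $A_x, A_{x'}$ to partition $S_1 \cup S_i$ for some $i \ge 2$ (the cardinalities $|A_x| + |A_{x'}| = 2k = |S_1 \cup S_i|$ make the partition exact), and analogously $r$-tuples in $\F_0$ must partition $S_1 \cup S_{j_1} \cup \cdots \cup S_{j_{r-1}}$. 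Encode these relations as an auxiliary graph $H$ on $[q]$, with $j_1 \sim j_2$ iff some pair of sets in $\F_0$ partitions $S_{j_1} \cup S_{j_2}$. The rigidity analysis shows $H$ is $2$-self-centered ($\rad(H) = \diam(H) = 2$) and that the common neighborhood of every two vertices of $H$ contains a $K_{q-2}$. Invoking the paper's improved lower bound for such graphs — the strengthening of Buckley's theorem announced in the introduction — gives an edge count in $H$ incompatible with the assumed structure, producing the contradiction.

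\emph{Sharp count, Karamata, and uniqueness.} With the clean transversal $T$ in hand, $\F_0 \subseteq \binom{[n] \setminus T}{k}$ yields $|\F| \le \binom{n-q}{k} + q$ immediately. For uniqueness, apply Karamata's inequality to the convex function $x \mapsto \binom{x}{k}$ on the rearranged coordinates of the type vectors of $\F_0$: equality in the global bound forces every type to be supported on $[n] \setminus T$ and every $k$-subset of $[n] \setminus T$ to belong to $\F_0$, pinning down the extremal family up to isomorphism as $\binom{[n] \setminus T}{k} \cup \{S_1, \ldots, S_q\}$.

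\emph{Main obstacle.} Because union-$q$-efficiency yields only one $q$-subcover per covering family, each application is disjunctive and produces only weak local information about a single pair or tuple of sets in $\F_0$. The main difficulty is therefore the bookkeeping in the middle paragraph: packaging all these local partitioning disjunctions as a graph whose hypotheses match those of the self-centered graph lemma, so that the improved edge bound can be invoked to rule out $S_1 \subseteq \bigcup \F_0$. Once that translation is made, Karamata converts the structural equality into the sharp count and uniqueness cleanly.
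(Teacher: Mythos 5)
Your middle paragraph --- the claimed existence of a ``clean transversal'' $T$ disjoint from every $A\in\F_0$ --- is the crux of your argument, and unfortunately the claim is false for general non-trivial union-$q$-efficient families, so the contradiction you hope to derive from its failure cannot exist. Take $q=k=3$, $n=9$, view $[9]$ as a $3\times 3$ grid, and let $\F$ consist of the three rows $A_1,A_2,A_3$ and the three columns $B_1,B_2,B_3$. A subfamily covers $[9]$ if and only if it contains all three rows or all three columns (if row $i$ and column $j$ are both absent, cell $(i,j)$ is uncovered), so every covering subfamily contains a covering triple and $\F$ is union-$3$-efficient and non-trivial. Yet for either choice of the partition $\{S_1,S_2,S_3\}$ (rows or columns), every element of every block lies in some member of $\F_0$, so no clean transversal exists; moreover no pair of sets in $\F_0$ partitions $S_1\cup S_i$, so the ``forced partitioning pairs'' you want to extract do not appear either. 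Since this family has only $6<\binom{6}{3}+3$ sets, it does not threaten the theorem, but it does show that your reduction proves the bound only for families that happen to admit a clean transversal, which is not all of them. A secondary problem: your auxiliary graph $H$ lives on $q$ vertices, whereas the paper's self-centered-graph lower bounds (Theorems~\ref{thr:Kq-2inN} and~\ref{thr:mge2n-3}) require the host graph to have at least $3q$ (resp.\ $5$) vertices relative to the clique parameter $q-2$; on $q$ vertices the hypotheses can never be met, so the ``improved Buckley bound'' cannot be invoked there.

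The paper's actual route is a global counting argument that never isolates a transversal. For each $j\in[n]$ it sets $\F(j)=\{A\in\F: j\in A\}$ and $X(j)=\bigcup\F(j)$, shows no $q-1$ of the $X(j)$ cover $[n]$, and deduces $\abs{\F(j)}\le\binom{\abs{X(j)}-2}{k-1}+1$. It then forms the graph $G$ on all $n$ ground-set elements whose edges are the non-edges of the $2$-shadow of $\F$; Claim~\ref{clm:propertiesG} shows $G$ is $2$-self-centered with a $K_{q-2}$ in every common neighborhood, so the graph theorems give $\sum_i d_i\ge(q-1)(2n-q)$. Karamata's inequality is then applied not to type vectors but to the degree sequence of $G$, against the convex decreasing function $f(x)=\binom{n-x-2}{k-1}+1$ majorized by the sequence with $q$ entries equal to $(q-1)k$ and $n-q$ entries equal to $q-1$; double counting $\abs{\F}=\sum_j\abs{\F(j)}/k$ closes both the bound and the uniqueness. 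If you want to keep your framework, you would need to replace the clean-transversal lemma by an averaging statement of this kind; as written, the gap is essential.
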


\begin{proof}
    The case $q=1$  trivially holds.  
    The case $q=2$ holds by an easier version of the proof for $q \ge 3.$ Thus we assume $q\geq 3.$
    Let $\F \subseteq \binom{[n]}{k}$ be a non-trivial union-$q$-efficient family as in the statement.
    For every $j \in [n],$ let $\F(j)=\{A \in \F \mid j \in A\}$ be the family of sets in $\F$ containing $j$ and $X(j)=\cup\{A \in \F \mid j \in A\} \subseteq [n]$ be the set of elements which are covered by $\F(j).$
    An important observation is the following.
    
    \begin{claim}\label{clm:2unionnot[n]}
       There is no index set $J$ with $J\subseteq [n]$ and $\abs{J}=q-1$ such that $\cup_{j \in J} X(j) =[n]$ 
    \end{claim}
    \begin{claimproof}
        Suppose by way of contradiction that there is index set $J$ with $J\subseteq [n]$ and $\abs{J}=q-1$ such that $\cup_{j \in J} X(j) =[n]$.  Then since $\F$ is union-$q$-efficient, there must be $q$ sets such that their union is $[n]$. Thus these sets must be disjoint since $n=qk$, but this is impossible since there will be at least two sets sharing an element from $J$ by the pigeonhole principle, a contradiction.
    \end{claimproof}
    Next, we prove an upper bound for the size of $\F(j)$.
    \begin{claim}\label{clm:uppbound}
        For every $j \in [n]$ we have 
        \[\abs{\F(j)}\leq \binom{\lvert X(j)\rvert -2}{k-1}+1.\]
    \end{claim}
    \begin{claimproof}
         The family $\F$ is union-$q$-efficient and non-trivial, thus  there are $q$ sets  $A_1, A_2, \ldots, A_q \in \F$ such that $\cup_{i\in [q]}A_i=[n]$.
         Thus since $n=qk$ and $\F$ is $k$-uniform the sets $A_i$, $i\in [q]$ are disjoint. 
        Without loss of generality, we may assume that $j \in A_1$. 
        By Claim~\ref{clm:2unionnot[n]} it is easy to note that  $A_2, A_3, \ldots, A_q \not \subseteq X(j)$.  Thus each $A_i$, $1<i\leq q$, contains a unique element of $[n]$ which is not an element of $X(j)$.  The family $\F(j) \backslash \{A_1\}$ does not covers all elements of $A_1$, since otherwise the family $\F(j) \backslash \{A_1\} \cup \left( \cup_{i=2}^{q} \{A_i\}\right)$ covers $[n]$, but there are no $q$ sets covering $[n]$ contradicting to the condition that $\F$ is union-$q$-efficient.   
        Hence $\F(j) \backslash A_1$ covers at most $\abs{X(j)}-1$ elements of $X(j)$, thus we have the desired inequality.
         %\[\abs{\F(j)}\leq \binom{\lvert X(j)\rvert -1}{k-1}+1.\]
    \end{claimproof} 
    Let $G$ be the graph with vertex set $[n]$ for which $i,j \in E(G)$ if and only if there is no $A \in \F$ for which $\{i,j\} \subseteq A$, i.e $G$ is a complement of the $2-$shadow of $\F$.
    This graph satisfies the following properties.
    \begin{claim}\label{clm:propertiesG}
        For every two vertices $u,v$ in $G$, their common neighborhood $G[N(u)\cap N(v)]$ contains a clique on $q-2$ vertices.
        The minimum degree of $G$ is at least $q-1$ and the maximum degree is bounded by $(q-1)k.$
    \end{claim}
    \begin{claimproof}
    For every $j \in [n]$  we have $\lvert X(j) \rvert \le n-(q-1)$ by Claim~\ref{clm:2unionnot[n]}. Thus the degree of vertex $j$ in $G$ is at least $q-1$ i.e. $\delta(G) \ge q-1.$
    
    By Claim~\ref{clm:2unionnot[n]}, for every $j,j'\in [n]$ there is $i_1 \not \in X(j) \cup X(j')$, i.e., $j$ and $j'$ have a common neighbor in $G$.
    One can repeat this for $J_\ell=\{j,j',i_1,\ldots, i_{\ell}\}$ by taking an $i_{\ell+1} \not \in \cup_{\gamma \in J_{\ell}} X(\gamma)$ whenever $\ell \le q-3.$
    Thus $G[\{i_1,\ldots, i_{q-2}\}]$ is a clique and all vertices $\{i_1,\ldots, i_{q-2}\}$ are adjacent to both $j$ and $j'$.
    
    Finally, every $j \in [n]$ belongs to at least one $k$-set in $\F$ since $\F$ is non-trivial and thus $\Delta(G)\le n-k=(q-1)k.$
    \end{claimproof}

    Let $\{d_1,d_2, \ldots, d_n\}$ be the degree sequence of $G$.
    As Claim~\ref{clm:propertiesG} implies that $G$ satisfies $\diam(G)=\rad(G)=2$ and the property that every common neighborhood of $2$ vertices contains a $K_{q-2}$,
    by Theorem~\ref{thr:mge2n-3} (for $q=3$) and Theorem~\ref{thr:Kq-2inN} (when $q>3$) and the handshaking lemma, we know that $\sum_{i=1}^n d_i \ge (q-1)(2n-q).$
    If this inequality is strict, decrease some of the values with the constraint that all their values are still at least $q-1$ in such a way that the sum is $(q-1)(2n-q).$ Let the resulting sequence be $\{x_1,x_2,\ldots, x_n\}.$
    Note that the latter sequence is majorized by $\{\underbrace{q-1,\ldots,q-1}_{n-q},\underbrace{(q-1)k,\ldots,(q-1)k}_{q}\}.$
    That is, the sum of the largest $i$ elements in the latter sequence is at least the sum of the largest $i$ elements in the sequence $\{x_1,x_2,\ldots, x_n\},$ for every $1 \le i \le n$, with equality if $i=n.$
    Let $f \colon \mathbb R \to \mathbb R \colon x \to \binom{n-x-2}{k-1}+1.$
    Restricted to the interval $[q-1,(q-1)k]$, this is a strictly convex function.
    By Karamata's inequality~\cite{Karamata32} and the fact that $f$ is decreasing, we have
    $$ \sum f(d_i) \le \sum f(x_i) \le (n-q)f(q-1)+qf((q-1)k).$$
    By Claim~\ref{clm:uppbound}, where $\abs{X(j)}=n-d_j$ and double-counting (each set is counted $k$ times), we conclude that 
    \[
\abs{\F}=\sum_{j\in [n]}\frac{\abs{\F(j)}}{k} \le \frac{(n-q)f(q-1)+qf((q-1)k)}{k}
    =\frac{n+(n-q)\binom{n-q-1}{k-1}}{k}
    =q+\binom{n-q}{k}. 
    \]
    When equality is attained, there are $q$ elements (without loss of generality $n-q+1$ till $n$) belonging to a unique $k$-set (since $d_i=(q-1)k$) and there are at most $\binom{n-q}{k}$ other $k$-sets which do not contain any of these $q$ elements, so all of these $k$-sets need to be contained in $\F$.
    The first $q$ sets have to be different and if they are not disjoint, there is an element $j \in [n]$ for which $\abs{X(j)}\ge n-k+2$, which is a contradiction.
    Hence equality does occur if and only if there are $q$ elements belonging to a unique (disjoint) $k$-set, and all $k$-sets of the remaining $n-q$ elements belong to $\F.$
    Noting that this family is union-$q$-efficient is immediate since a union of sets from the family can only be equal to $[n]$ if the $q$ disjoint sets all belong to the family.
    An example of a maximum family $\F$ and the corresponding graph $G$ has been given in Figure~\ref{fig:Gsize2n-5} for $k=4, q=3$.
    Here every $4$-set within the light grey box belongs to $\F.$
\end{proof}

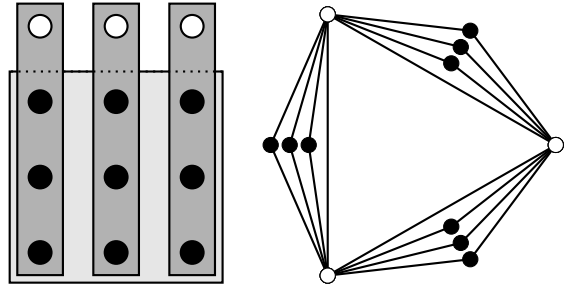
\begin{figure}[h]
    \centering
    \centering
\begin{tikzpicture}[thick]
\draw[fill=black!10!white] (1-0.4,0.6) rectangle (3+0.4,3.4);
\foreach \x in {1,2,3}{

\draw [fill=black!30!white] (\x-0.3,0.7) rectangle (\x+0.3,4.3);
\draw[fill=white] (\x,4) circle (0.15);
\foreach \y in {1,2,3}{
\draw[fill] (\x,\y) circle (0.15);
}

}
\draw[dotted] (3.4,3.4) -- (0.6,3.4);
\end{tikzpicture}
\quad
    \begin{tikzpicture}
    
    \foreach \x in {0,120,240}{\draw[] (\x:2) circle (0.1);
    \draw[thick](\x:2) -- (\x+120:2);
}

  \foreach \x in {60,180,300}{
  \foreach \t in {1.25,1.5,1.75}
  {
  \draw[fill] (\x:\t) circle (0.1);
  \draw[thick] (\x-60:2)--(\x:\t) -- (\x+60:2);
}
\foreach \x in {0,120,240}{\draw[fill=white] (\x:2) circle (0.1);
}
}   

\end{tikzpicture}
    \caption{Sketch of a maximum  union-$3$-efficient family $\binom{[12]}{4}$ and the associated graph $G$ with $\diam(G)=\rad(G)=2$ and the triangle property}
    \label{fig:Gsize2n-5}
\end{figure}

We remark that as was the case with $k=2$, the largest non-trivial $q$-simplex-free families can have a larger size than the largest non-trivial $q$-Helly family. E.g. when $n=qk$, let $\F^c=\{A \in \binom{[n]}{k} \mid \abs{ A \cap [q+2] } \le 1\}$. It has size $\binom{n-q}{k}+q\binom{n-q-2}{k-1}-\binom{n-q-2}{k-2}$ and is $q$-simplex-free.
\begin{comment}
This can be considered as a case related to the Erd\H{o}s matching conjecture~\cite{Erdos65}, which asks for the largest family $\F \subseteq \binom{[n]}k$ without $q$ pairwise disjoint sets.
Here the case where $n \sim qk$ is known to be true~\cite{Frankl17}, being a trivial family isomorphic to $\binom{[kq-1]}{k}.$
In that range, one could wonder about the largest family $\F$ which has no $q$ pairwise disjoint members, but $\F$ is not trivial, i.e. the union of all sets in $\F$ is equal to $[n].$
\end{comment}

\section{Minimum size of $2$-self-centered  graphs}\label{sec:minimumsizegraph}

Estimating the size of graphs (determining the minimum and the maximum) with some given parameters (mostly order and one other parameter) is a fundamental question in extremal combinatorics.
For example, finding the minimum/ maximum size of certain critical graphs with given order and diameter $2$ is challenging.
In~\cite{KE95, HY98,CF05} authors proved that the minimum size of a vertex-diameter-$2$-critical graph (a graph with diameter $2$ for which the diameter increases by deleting any of its vertices) is roughly $\frac 52 n.$

In~\cite{Buckley79} it was proven that a self-centered graph (a graph for which diameter and radius are equal, initially called equi-eccentric graph) with a diameter equal to $2$, has a size of at least $2n-5.$ The $2$-self-centered graphs with size equal to $2n-5$ have been characterized in~\cite{AA81}.
By observing that a graph with diameter $2$ has radius $2$ if and only if the maximum degree satisfies $\Delta< n-1$, the bound of $2n-5$ edges had been derived before by Erd\H{o}s and Renyi~\cite{ER62}.

\begin{comment}
\sc{Could not open Buckley's papers...}
A possible sketch of proof is as follows.
The only $2$-self-centered bipartite graphs are complete bipartite graphs $K_{a,b}$ with $a,b \ge 2$ and have at least $2(n-2)$ edges.
If $G$ is not bipartite, its odd-girth is at most $5$ and if there is a triangle in $G$, one can deduce that $G$ also contains a $C_5.$
By a discharging method, one can note that all other vertices outside $C_5$ are connected with at least $2$ (different) edges outside the $C_5$.
Furthermore all these vertices have one or two neighbors on the $C_5$.
\end{comment}

%Self-centered graphs with diameter $2$ have been characterized in~\cite{SMM19}.
A related question was solved in~\cite{BE76}, where the non-adjacent vertices have a minimum number of common neighbors.
We consider a similar question, where adjacent vertices have at least one common neighbor, i.e.
the graph $G$ has the triangle-property: every edge of $G$ is contained in a triangle.
This property has been studied before e.g. in~\cite{PR16} for $4$-regular graphs.
Since every graph with the triangle-property is the union of some triangles, for a connected graph with the triangle-property, the size $m$ satisfies $m \ge \frac 32(n-1).$
If $G$ is $2$-self-centered and has the triangle-property, we prove that its size is at least $2n-3$. 
We first prove such a result for when any $2$ vertices in their neighborhood share a $K_{q-2}$ for $q \ge 4$.
%Equality is obtained if one can build the graph iteratively.
%Starting from a $G_1=K_3$, one constructs $G_{i+1}$ by adding a triangle which has one vertex in common with $G_i.$ 
%For example $k$ triangles with one vertex in common, i.e. a fan on $2k+1$ vertices with a matching of size $k$ superimposed, is such a graph for which equality holds and for which the diameter equals two.

%If we impose the condition that $G$ satisfies $\rad(G)=\diam(G)=2,$ then the bound on the size increases drastically.

\subsection{The minimum size of a graph $G$ with a $K_{q-2}$ in the common neighborhood of every $2$ vertices}

In this subsection, we determine the minimum size of a $2$-self-centered graph $G$ with a copy of $K_{q-2}$ in the common neighborhood of every pair of vertices. More precisely, we prove the following theorem.

\begin{thr}\label{thr:Kq-2inN}
    Let $q\ge 4$ and $n\ge 3q$ be integers.
    Let $G$ be a graph for which $\rad(G)=2$ and such that for every $u,v \in G$, $G[N(u) \cap N(v)]$ contains a $K_{q-2}.$
    Then the number of edges of $G$ is at least $(q-1)n -\binom{q}{2}.$
\end{thr}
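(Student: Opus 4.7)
The approach is to produce a $K_q$ clique $C \subseteq V(G)$ that is \emph{dominating}, meaning every $v \notin C$ satisfies $|N(v) \cap C| \geq q - 1$. Given such a $C$, the bound follows immediately from
$$|E(G)| \geq \binom{q}{2} + (n-q)(q-1) = (q-1)n - \binom{q}{2},$$
counting the $\binom{q}{2}$ edges inside $C$ and at least $(q-1)$ edges from each outside vertex to $C$.

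First I would extract the basic structural consequences of the hypothesis. For any vertex $v$ and any neighbor $u$ of $v$, the containment $K_{q-2} \subseteq N(u) \cap N(v) \subseteq N(v) \setminus \{u\}$ forces $\deg(v) \geq q - 1$, so $\delta(G) \geq q - 1$. Moreover, if $\deg(v) = q - 1$, applying the same bound for every $u \in N(v)$ shows that $N(v) \setminus \{u\}$ is itself a $K_{q-2}$; letting $u$ range over $N(v)$ (using $q \geq 4$) forces $N(v)$ to be a $K_{q-1}$, so $T_v := N[v]$ is a $K_q$. In particular, every edge $uv$ lies in a $K_q$, namely $\{u,v\}$ together with a $K_{q-2}$ inside $N(u) \cap N(v)$.

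Let $L = \{v : \deg(v) = q - 1\}$. If $L \neq \emptyset$, pick any $v \in L$ and set $C := T_v$. For $u \notin C$ we have $u \not\sim v$ (since $N(v) \subseteq C$), so the hypothesis applied to $(u, v)$ gives $|N(u) \cap C| \geq q - 2$, hence $|N(u) \cap C| \in \{q - 2, q - 1\}$. Let $B = \{u \notin C : |N(u) \cap C| = q - 2\}$ be the defect set; a direct calculation reduces the theorem to showing $|E(G[V \setminus C])| \geq |B|$. Here I invoke the structural lemma that $|L \cap T_{v'}| \leq 2$ for every $v' \in L$: any $w \notin T_{v'}$ has at least $q - 2$ neighbors in $T_{v'}$, and since each $u' \in L \cap T_{v'}$ has all its neighbors inside $T_{u'} = T_{v'}$ (so $w \not\sim u'$), these neighbors of $w$ lie in $T_{v'} \setminus L$, giving $|T_{v'} \setminus L| \geq q - 2$. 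This lemma pairs up low-degree vertices inside $B$ (each pair contributing one edge in $V \setminus C$), while high-degree vertices in $B$ (degree $\geq q$) each supply at least two outside-$C$ edges.

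The case $L = \emptyset$, where $\delta(G) \geq q$, must be handled separately, using the double-counting identity $\sum_w \binom{\deg(w)}{2} = \sum_{\{u,v\}} |N(u) \cap N(v)| \geq (q-2)\binom{n}{2}$ together with Karamata's convexity inequality applied to $\binom{\cdot}{2}$, forcing the degree sum above the required threshold. The main obstacle is the defect-counting step: a naive incidence bound yields only $|B|/2$ edges in $V \setminus C$, so one must carefully split $B$ into its low-degree and high-degree subsets and use the pairing lemma to recover the full $|B|$. The hypothesis $n \geq 3q$ ensures the $|L \cap T| \leq 2$ lemma applies non-vacuously (some vertex always lies outside a given $T$), and $\rad(G) = 2$ prevents $G$ from decomposing as disjoint $K_q$-components that would otherwise break the analysis.
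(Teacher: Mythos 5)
Your overall architecture for the main case coincides with the paper's: both take a minimum-degree vertex $v$ with $\deg(v)=q-1$, observe that $C=N[v]$ is a $K_q$, note that every outside vertex sends at least $q-2$ edges into $C$, and reduce the theorem to finding $\lvert B\rvert$ extra edges inside $V\setminus C$, where $B$ is your defect set (the paper's $V_1$). The reduction itself is correct. However, the mechanism you propose for recovering those $\lvert B\rvert$ edges does not close the gap you yourself identify. Splitting $B$ into $B_{\mathrm{low}}$ (degree $q-1$) and $B_{\mathrm{high}}$ (degree $\ge q$) and summing ``one edge per low vertex, two per high vertex'' still double-counts: if $w\in B_{\mathrm{high}}$ has exactly two outside neighbours and both lie in $B_{\mathrm{low}}$, then $w$ contributes no edge beyond those already charged to its low-degree neighbours, and the count comes out at $\lvert B\rvert-\lvert B_{\mathrm{high}}\rvert$. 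Your lemma $\lvert L\cap T_{v'}\rvert\le 2$ does not rule this out. (As an aside, ``pairing up'' two adjacent vertices of $B_{\mathrm{low}}$ is actually impossible here: it would force the $q-2$ vertices of $T_u\cap C$ to be universal, contradicting $\rad(G)=2$; but even granting that, the count above still falls short.) The ingredient you are missing is the one the paper extracts from $\rad(G)=2$: for each $v_i\in N(v)$ there is a vertex $s_i\notin N[v]$ with $s_i\not\sim v_i$, and applying the common-$K_{q-2}$ hypothesis to the pair $(w_1,s_i)$, where $w_1\in B_{\mathrm{low}}$ has unique outside neighbour $w_2$, forces $w_2\sim s_1,\dots,s_{q-2}$ with every $s_i$ of degree $\ge q$. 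This is exactly what guarantees that such a $w_2$ owns at least $q-2\ge 2$ outside edges disjoint from the edges charged to $B_{\mathrm{low}}$, and it is the step that makes the weighting work. Note that you never actually use $\rad(G)=2$ in this case, which is a warning sign.

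The case $\delta(G)\ge q$ is also broken as proposed. The identity $\sum_w\binom{\deg(w)}{2}=\sum_{\{u,v\}}\lvert N(u)\cap N(v)\rvert\ge(q-2)\binom{n}{2}$ is fine, but convexity runs the wrong way: a lower bound on $\sum_w\binom{\deg(w)}{2}$ gives no useful lower bound on $\sum_w\deg(w)$, since the codegree sum can be inflated by a few vertices of very high degree while the total degree stays small (Karamata/Jensen only bounds $\sum\binom{d_w}{2}$ from \emph{below} in terms of the average degree). The paper instead argues locally: with $a=\delta(G)-(q-1)\le q-2$, every edge lies in a $K_q$, so $G[N[v]]$ has minimum degree $q-1$ and at least $\binom{q}{2}+(q-1)a-\binom{a}{2}$ edges, while every vertex outside $N[v]$ has $q-2$ neighbours in $N(v)$ plus at least $a+1$ further neighbours; summing gives $(q-1)n-\binom{q}{2}+(n-2a-q)\frac{a-1}{2}\ge(q-1)n-\binom{q}{2}$. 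You would need to replace your codegree argument by something of this kind.
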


The lower bound in Theorem~\ref{thr:Kq-2inN} is sharp. Equality is attained by an $n$ vertex graph $G$, such that the vertex set of $G$ is partitioned into $q+1$ non-empty sets $A_0=[q],A_1,\dots,A_q$, where $G[A_0]$ is isomorphic to $K_q$, $A_i$ for $i\in [q]$ are independent sets and for each $i\in[q]$ every vertex of $A_i$ is adjacent to all vertices from $A_0\setminus \{i\}$.

\begin{comment}
\begin{proof}
Since every vertex belongs to a clique $K_{q}$, we have $\delta(G) \ge q-1.$
The case $\delta \ge q$ is settled in Proposition~\ref{prop:deltaGeQcase} and the case $\delta = q-1$ is done in Proposition~\ref{prop:delta=Qcase}.
\end{proof}
\end{comment}

Since every vertex belongs to a clique $K_{q}$, we have $\delta(G) \ge q-1.$
We prove the cases $\delta(G) = q-1$
and $\delta \ge q$ separately in the following two propositions.

First, we prove the statement in a more general form for $q\ge 4$ in the case that the minimum degree is exactly $q-1.$ %Note that the $\rad(G)=2$ constraint was necessary for $q=3$, but so here the larger intersection of neighborhoods when $q \ge 4$ imply that the bound also holds when there would be a universal vertex.

\begin{prop}\label{prop:delta=Qcase}
    Let $q\ge 4$ and $n\ge 2q$ be integers.
    Let $G$ be a graph with $\rad(G)=2$, $\delta(G)=q-1$ such that for every $u,v \in G$, $\abs{N(u) \cap N(v)}\geq q-2$.
    Then the number of edges in $G$ is at least $ (q-1)n -\binom{q}{2}.$
\end{prop}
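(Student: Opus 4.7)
My plan is to choose a vertex $v$ of minimum degree $q-1$, analyze the structure around $v$, and reduce the desired bound to a combinatorial inequality on the induced subgraph $G[V']$, where $V' = V \setminus N[v]$. For each $v_i \in N(v)$, the condition $|N(v) \cap N(v_i)| \geq q-2 = |N(v) \setminus \{v_i\}|$ forces $v_i$ to be adjacent to every other vertex of $N(v)$, so $N[v]$ induces a $K_q$. Partition $V' = A \cup B$, where $A = \{u \in V' : |N(u) \cap N(v)| = q-2\}$ (each such $u$ has a unique missing vertex $m(u) \in N(v)$; let $A_i = \{u \in A : m(u) = v_i\}$) and $B = V' \setminus A$. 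The hypothesis $\rad(G) = 2$ rules out universal vertices, so each $v_i$ has a non-neighbor in $V'$, which must lie in $A_i$; hence $a_i := |A_i| \geq 1$ for every $i$ and $|A| \geq q-1$.

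A direct count of edges gives
\[
m = \binom{q}{2} + (q-2)|A| + (q-1)|B| + e(V'),
\]
so the desired bound $m \geq (q-1)n - \binom{q}{2}$ is equivalent to the inequality $e(V') \geq |A|$.

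I will prove $e(V') \geq |A|$ by induction on $|A|$, using two structural constraints on $G[V']$: every $u \in A$ has at least one neighbor in $V'$ (since $\delta(G) = q-1$ and $u$ already has $q-2$ neighbors in $N(v)$), and for each cross pair $(u, u') \in A_i \times A_j$ with $i \neq j$, there is a common neighbor in $V'$ (because the $N(v)$-contribution to $N(u) \cap N(u')$ has only $q-3$ elements, and $v$ itself is non-adjacent to both). The critical case is when some $u \in A_i$ has $V'$-degree exactly one: if $x$ denotes its unique $V'$-neighbor, then $x$ must serve as the common neighbor for every cross pair $(u, u')$ with $u' \in A \setminus A_i$, forcing $x \sim u'$ for all such $u'$. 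Moreover $x$ cannot belong to $A_j$ for any $j \neq i$, since then the cross pair $(u, x)$ itself would be short a common $V'$-neighbor. So either $x \in B$ (and if additionally $a_i = 1$, then $x$ is adjacent to every vertex of $A$, giving $e(V') \geq d_{V'}(x) \geq |A|$ immediately) or $x \in A_i$; in the remaining situations $a_i \geq 2$ and deleting $u$ yields a smaller graph still satisfying both hypotheses, so the inductive hypothesis gives $e(V' \setminus \{u\}) \geq |A| - 1$ and the deleted edge $ux$ recovers the bound. If on the other hand every $u \in A$ has $V'$-degree at least two, then $2e(V') \geq \sum_{u \in A} d_{V'}(u) \geq 2|A|$ and we are done.

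The main obstacle will be the case analysis verifying that the inductive reduction preserves both hypotheses after deleting $u$: non-emptiness of every $A_j$-class (which is why we need $a_i \geq 2$ in the reduction case and argue the hub case separately when $a_i = 1$), and the cross-pair common-neighbor condition (which relies on the observation that $u$ itself can never be a common $V'$-neighbor of a cross pair $(u', u'')$ with $u', u'' \neq u$, because $u$'s only $V'$-neighbor is $x$). The base case $|A| = q - 1$, where each $a_i = 1$, is handled by the same dichotomy: either some $u_i$ has $V'$-degree one and forces a hub $x \in B$ adjacent to all of $A$, or every $u_i$ has $V'$-degree at least two and the degree sum already yields $e(V') \geq q - 1$.
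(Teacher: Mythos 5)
Your argument is correct. The setup---taking a minimum-degree vertex $v$, showing $N[v]$ induces a $K_q$, and splitting $V'=V\setminus N[v]$ into the vertices $A$ with exactly $q-2$ neighbours in $N(v)$ and the rest $B$---is the same as the paper's, but the core count is genuinely different. The paper runs a direct discharging argument: it introduces the non-neighbours $s_i$ of the $v_i$ and the auxiliary edge set $R$ of edges of $G[V']$ having an endpoint of degree $q-1$, and charges each vertex of $V'$ with $q-1$ (half-)edges, the key point being that an edge of $R$ cannot be claimed twice because the second endpoint of such an edge is forced to be adjacent to $q-2$ of the $s_i$ and hence has large degree. You instead reduce cleanly to the inequality $e(G[V'])\geq \abs{A}$ and prove it by induction on $\abs{A}$, using that every cross pair $A_i\times A_j$ needs a common neighbour inside $V'$: a vertex $u\in A_i$ of $V'$-degree one forces its unique neighbour $x$ to be a hub adjacent to all of $A\setminus A_i$ and to lie in $A_i\cup B$, after which you either conclude directly (hub case, $a_i=1$) or delete $u$ and recurse. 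Both proofs exploit the same structural fact (the unique $V'$-neighbour of a pendant $A$-vertex has many forced adjacencies), but your reduction isolates exactly what must be counted and replaces the half-edge weighting by an induction; it is arguably cleaner and makes the extremal configuration transparent. In a full write-up the points to spell out are exactly the ones you flag: each $A_i$ is non-empty (from $\rad(G)=2$ and the pair condition applied to $s_i$ and $v$), non-emptiness is preserved by the deletion (hence the split on $a_i=1$ versus $a_i\geq 2$, with the $x\in A_i$ subcase needing the observation that $d_{V'}(x)\geq 2$ because some other class is non-empty), and a $V'$-degree-one vertex can never be the common neighbour of a surviving cross pair.
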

\begin{proof}
Let $v$ be a vertex of minimum degree of $G$, $\deg(v)=\delta(G)=q-1$ and let the neighborhood of $v$ be $N(v)=\{v_i:i\in [q-1]\}.$ 
Since for every $i \in [q-1]$, $v_i$ and $v$ share $q-2$ common neighbors $G[N[v]]$ is isomorphic to $K_{q}$.
%Even more every vertex $ u \in V \backslash N[v]$ has at least $q-2$ neighbors in $N(v).$ %mentioned later
Since $\rad(G)=2,$  for every vertex $v_i$, $i\in[q-1]$, there is a vertex $s_i\in V \backslash N[v]$ not adjacent to $v_i$. Since each vertex has at least $q-2$ neighbors in $N(v)$, vertices $s_i$ are distinct. Let us denote 
\[
R=\{w_1w_2 \in E(G) \colon \deg(w_1)=q-1, \abs{N(w_1)\cap N(v)}=q-2, w_1, w_2 \not\in N[v]\}.
\] 
%Observe that also $w_1 \not \in N[v],$ since $v_i$ ($i \in [q-1]$) has $q-1$ neighbors in $N[v]$ and also every $s_j, j \in [q-1], j \not=i$ is a neighbor of $v_i.$
Let $w_1w_2$ be an edge from $R$. 
%then we have $\deg(w_1)=q-1$ and $\abs{N(w_1)\cap N(v)}=q-2$. %repitive
We may assume $N(w_1)\cap N(v)=\{v_i:i\in[q-2]\}$.
Even more, since the common neighborhood of $w_1$, $w_2$ contains $q-2$ vertices, $w_2$ is adjacent with $v_1,\dots,v_{q-2}$. The vertex $w_2$ is also incident with $\{s_i: i\in [q-2]\}$, since $w_1$ and $s_i$ ($i\in [q-2]$) have at least $q-2$ vertices in the common neighborhood. Note that if $N(w_2)\cap N(v)=\{v_i:i\in[q-2]\}$, then $\deg(s_i)\geq q$ for $1\leq i\leq q-2$, since $w_2$ and $s_i$ share at least $q-2$ common neighbors.
%(as it was for $w_2$ and $w_1$).

Now we are ready to lower bound the number of edges. 
There are $\binom{q}{2}$ edges in $N[v]$.
For each vertex $u\in V\backslash N[v]$, there are at least $q-2$ edges from $u$ to $N(v)$, let $V_{1}\subseteq V\backslash N[v]$ be vertices with exactly $q-2$ neighbors in $N(v)$.
For each vertex $u\in V_1$ either $\deg(u)=q-1$ (and it is incident with exactly one edge in $R$) or $u$ is incident with at least $q$ edges from $E(G)\backslash R$, since if it is incident to at least one edge from $R$, then it is adjacent to $q-2$ vertices from $\cup_{i\in [q-1]}\{s_i\}$ none of which has degree $q-1$. Note that $q-2\geq 2$.
Thus for each vertex in $V\backslash N[v]$ there are either $q-1$ edges to $N(v)$, or there are $q-2$ edges to $N(v)$ and exactly one edge from $R$, or there are $q-2$ edges to $N(v)$ and at least two edges in $E[G[V\backslash N[v]]]\backslash R$.
If we associate these edges with the vertex, except the edges in $E[G[V\backslash N[v]]]\backslash R$ which are taken with a weight of a half, then every edge is counted (with weight) at most once and every vertex in $V \backslash N[V]$ is associated with a total weight of edges of at least $q-1$.
Hence there are at least $\binom{q}{2}+(n-q)(q-1)$ edges.
An example for which equality is attained is shown in Figure~\ref{fig:sketch}, where the edges in $R$ are presented in red.
\end{proof}

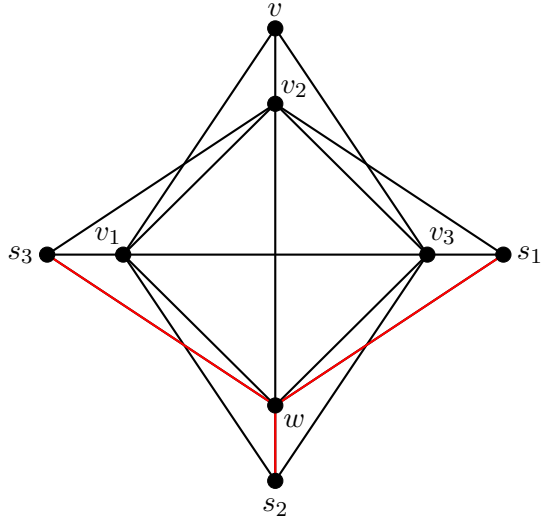
\begin{figure}[h]
    \begin{center}
    \begin{tikzpicture}
    {
    
    \foreach \x in {0,90,180,270}
    {
     \draw[thick] (\x:2) -- (\x+90:2);
     \draw[thick] (\x:3) -- (\x+90:2);
     \draw[thick] (\x:3) -- (\x:2);
     \draw[thick] (\x:3) -- (\x-90:2);
    }
    \draw[thick] (0:2) -- (180:2);
    \draw[thick] (90:2) -- (270:2);
    \foreach \x in {0,180,270}
    {
    \draw[thick, color=red] (\x:3) -- (-90:2);
    }
\foreach \x in {0,90,180,270}
    {
     \draw[fill] (\x:2) circle (0.1);
     \draw[fill] (\x:3) circle (0.1);
    }
    
   \node at (90:3.25) {$v$};

    \node at (-2.2,0.25) {$v_1$};
    \node at (0.25,2.2) {$v_2$};
    \node at (0.25,-2.2) {$w$};
    \node at (2.2,0.25) {$v_3$};
    \node at (180:3.35) {$s_3$};
    \node at (0:3.35) {$s_1$};
    \node at (270:3.35) {$s_2$};

    }
	\end{tikzpicture}\\
\end{center}
\caption{Extremal graph for Proposition~\ref{prop:delta=Qcase} with $n=8$ and $q=4$}\label{fig:sketch}
\end{figure}

\begin{remark}
    The condition  $\rad(G)=2$ is  necessary here. Without that condition, one can take $\frac{n-(q-2)}{2}$ copies of $K_q$ which pairwise intersect in a fixed copy of $K_{q-2}.$ The latter construction has a smaller size.
\end{remark}

Next, we prove the case where $\delta \ge q$. Here the statement is true without the constraint $\rad(G)=2.$

\begin{prop}\label{prop:deltaGeQcase}
    Let $q\ge 3$ and $n\ge 3q-4$ be integers.
    Let $G$ be a graph such that for every $u,v \in G$, $G[N(u) \cap N(v)]$ contains a $K_{q-2}$ and $\delta(G)\ge q.$
    Then the number of edges in $G$ is at least $ (q-1)n -\binom{q}{2}.$
\end{prop}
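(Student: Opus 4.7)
The plan is to pick a minimum-degree vertex $v$, decompose the edges of $G$ around $N[v]$, and then combine the $K_{q-2}$-condition with the degree lower bound on outside vertices. First I would reduce to the case $q \le \delta(G) \le 2q-3$, since $\delta(G) \ge 2q-2$ gives $2|E(G)| \ge (2q-2)n \ge (q-1)(2n-q)$ immediately. Fix $v$ with $d := \deg(v) = \delta(G)$ in this range, write $N(v) = \{v_1,\dots,v_d\}$, and split
\[
|E(G)| = d + y + x + z,
\]
where $y = |E(G[N(v)])|$, $x = |E(N(v), V \setminus N[v])|$, and $z = |E(G[V \setminus N[v]])|$.

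The hypothesis applied to each pair $\{v,w\}$ with $w \ne v$ says $w$ has at least $q-2$ neighbors in $N(v)$, forming a clique $K_{q-2}$. Summing over $w \in V \setminus N[v]$ gives $x \ge (n-d-1)(q-2)$, summing over all $w \ne v$ gives $2y + x \ge (n-1)(q-2)$, and the bound $\delta(G) \ge d$ on outside vertices gives $x + 2z \ge d(n-d-1)$. Substituting the second and third inequalities into $|E(G)| = d + y + x + z$ at their tight forms yields
\[
|E(G)| \ge d + \tfrac{1}{2}\bigl((n-1)(q-2) + d(n-d-1)\bigr),
\]
and a short manipulation shows this is at least $(q-1)n - \binom{q}{2}$ precisely when $(d-q)(n-d-q) \ge 2q-2-d$. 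In the regime $d \ge q+1$ with $n$ sufficiently large, this is automatic; in the remaining small-$n$ cases the cruder estimate $2|E(G)| \ge dn$ (from $\delta(G) \ge d$) already beats $(q-1)(2n-q)$ whenever $n \le q(q-1)/(2q-2-d)$.

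The main obstacle is the critical case $d = q$, in which the LP bound above falls short of the target by exactly $(q-2)/2$. Here I would prove the structural claim that $G[N(v)]$ is either $K_q$ or $K_q$ minus a single edge, so that $y \ge \binom{q}{2} - 1$. Applying the $K_{q-2}$-condition to $\{v,v_i\}$ forces each $v_i$ to have at least $q-2$ of the other $q-1$ vertices of $N(v)$ as neighbors, and those $q-2$ form a clique; if two distinct non-edges existed in $G[N(v)]$ they would either share a vertex (reducing the count of some $v_i$'s neighbors in $N(v)$ to $q-3$, a contradiction) or be disjoint (forcing the clique of neighbors of $v_i$ to contain the other non-edge, again a contradiction). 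Substituting $y \ge \binom{q}{2}-1$ together with $x = (n-q-1)(q-2)$ and $z = n-q-1$ into the decomposition gives exactly
\[
|E(G)| \ge q + \bigl(\tbinom{q}{2}-1\bigr) + (q-1)(n-q-1) = (q-1)n - \binom{q}{2}.
\]
An analogous but more intricate structural bound on $y$ (using the fact that every vertex of $G[N(v)]$ lies in some $K_{q-1}$) will be needed to patch a few remaining boundary values of $n$ when $q+1 \le d \le 2q-3$, and I expect this to be the most delicate portion of the argument.
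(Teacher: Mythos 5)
Your overall strategy---reduce to $q \le \delta(G) \le 2q-3$, fix a minimum-degree vertex $v$, and decompose the edge set around $N[v]$---is essentially the paper's, and your treatment of the critical case $d=q$ is correct and complete: the argument that the complement of $G[N(v)]$ is a matching with at most one edge (two incident non-edges violate the degree count, two disjoint non-edges violate the clique condition) gives $y \ge \binom{q}{2}-1$, and the final computation checks out. The problem is the intermediate range $q+1 \le d \le 2q-3$, where your proof has a genuine unfilled gap. Your LP bound needs $(d-q)(n-d-q) \ge 2q-2-d$, which for $d=q+1$ amounts to $n \ge 3q-2$, while the hypothesis only gives $n \ge 3q-4$; and the fallback $2\abs{E(G)} \ge dn$ covers only $n \le q(q-1)/(2q-2-d)$. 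For $(q,d,n)=(5,6,11)$ the target is $34$ while both of your estimates give $33$, and neither integrality nor the crude bound closes the last edge; the same failure occurs at $(q,q+1,3q-4)$ for every $q \ge 5$. You flag this yourself (``a more intricate structural bound on $y$ will be needed''), but that missing piece is exactly where the remaining content lies, so as written the proof is complete only for $q \in \{3,4\}$.

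The paper closes this gap with a count inside $N[v]$ that is the natural generalization of your $d=q$ argument and is not actually intricate. For every $u \in N(v)$ the set $N(u)\cap N(v)$ contains a $K_{q-2}$, so the edge $uv$ extends to a $K_q$ inside $N[v]$ and hence $\delta(G[N[v]]) \ge q-1$. Fixing one $K_q$ through $v$ and letting $A$ be the remaining $a := d-(q-1)$ vertices of $N[v]$, one gets
\[
\abs{E(G[N[v]])} \;\ge\; \binom{q}{2} + (q-1)a - \binom{a}{2},
\]
which is far stronger than the $2y \ge d(q-2)$ you extract from the pairwise condition alone. Each vertex outside $N[v]$ then contributes $q-2$ edges into $N(v)$ plus at least $a+1$ further edges counted with weight $\tfrac12$, and the total is $(q-1)n-\binom{q}{2}+(n-2a-q)\tfrac{a-1}{2}$, whose error term is nonnegative precisely because $n \ge 3q-4 \ge 2a+q$. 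Replacing your inequality $2y+x \ge (n-1)(q-2)$ by this clique-based count handles all $d$ in the range uniformly and makes the boundary cases disappear.
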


\begin{proof}
If $\delta(G) \ge 2(q-1),$ then by the handshaking lemma we have $\abs{E(G)} \ge (q-1)n$ and we are done.
So assume $2q-3 \ge \delta(G) \ge q$ and let $a=\delta(G)-(q-1).$
Let $v$ be a vertex with $\deg(v)=\delta(G).$
Since for every $u \in N[v]$, $uv$ is in a $K_q$, $\delta(G[N[v]])\ge q-1$.
Take a $K_q$ containing $v$ and let $A$ be the set with the $a$ other vertices of $N[v]$.
The number of edges in $G[N[v]]$ containing at least one vertex in $A$ is equal to
$$\sum_{u \in A} \deg_{G[N[v]]}(u) - \abs{E[A]} \ge (q-1)a-\binom{a}{2}.$$
Hence the number of edges in $G[N[v]]$ is at least $\binom{q}{2}+(q-1)a-\binom{a}{2}.$
Every vertex $z\in V \backslash N[v]$ has at least $q-2$ neighbors in $N(v)$ and at least $a+1$ additional neighbors.
This implies that 
\begin{align*}
    \abs{E(G)}&\ge  \binom{q}{2}+(q-1)a-\binom{a}{2}+(n-a-q)\left(q-1+\frac{a-1}{2}\right)\\ %\abs{E(G[N[v]])} + 
    &= n(q-1)-\binom{q}{2}+(n-2a-q)\frac{a-1}{2}\\
    &\ge n(q-1)-\binom{q}{2}. 
\end{align*}
\end{proof}

\begin{remark}
    The bound in Proposition~\ref{prop:deltaGeQcase} does not hold if one relaxes the condition $G[N(u) \cap N(v)]$ contains a $K_{q-2}$ to the condition $\abs{N(u) \cap N(v)}\geq q-2$, as was the case with Proposition~\ref{prop:delta=Qcase}.
    %When $q \ge 6$, one can take $K_{q-2} \backslash M$ ($M$ is a maximum matching of $K_{q-2}$) and take disjoint copies of $K_3$, all of whose vertices are connected with the vertices in $K_{q-2} \backslash M$.
Since  one can take a graph $(K_{q-2}\backslash M)+\left(\frac{n-(q-2)}{3}K_3\right)$\footnote{this is the graph join of $K_{q-2}$ and $\frac{n-(q-2)}{3}K_3$}  which is the graph that contains a clique of size $q-2$ minus a maximal matching $K_{q-2}\backslash M$ and  $\frac{n-(q-2)}{3}$ disjoint copies of $K_3$, such that every vertex of each $K_3$ is adjacent to the $q-2$ vertices of the clique minus a matching. This  construction has fewer edges than in Proposition~\ref{prop:deltaGeQcase} for every $q \ge 6$ and for every $n\geq q+1$ congruent to $q-2$ modulo $3$.

\end{remark}

\subsection{The minimum size of a $2$-self-centered graph $G$ with the triangle-property}

\begin{thr}\label{thr:mge2n-3}
     Let $G$ be an $n$-vertex graph satisfying the triangle-property and $\diam(G)=\rad(G)=2.$ Then the number of edges of $G$ is at least $2n-3.$
\end{thr}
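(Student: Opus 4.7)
The plan is to split the argument based on $\delta(G)$. When $\delta(G)\geq 3$, I would directly invoke Proposition~\ref{prop:deltaGeQcase} with $q=3$: the triangle property combined with $\diam(G)=2$ guarantees that every pair of vertices shares a common neighbor, which is exactly the $K_{q-2}=K_1$ condition. The proposition then yields $|E(G)|\geq (q-1)n-\binom{q}{2}=2n-3$. When $\delta(G)=2$, I fix a degree-$2$ vertex $v$ with $N(v)=\{v_1,v_2\}$; the triangle property forces $v_1v_2\in E(G)$, so $G[N[v]]\cong K_3$, and $\deg(v_1),\deg(v_2)\geq 3$ (otherwise, if $\deg(v_1)=2$, the triangle property gives $N(v_1)=\{v,v_2\}$ and $\diam(G)=2$ forces $v_2$ to be universal, contradicting $\rad(G)=2$). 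I then partition $V\setminus N[v]=V_1\sqcup V_2\sqcup V_{12}$ by adjacency to $v_1$ and $v_2$ and split on whether $V_{12}$ is empty.

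If $V_{12}\neq\emptyset$, the strategy is to delete $v$ and apply induction on $n$. The edge $v_1v_2$ remains in a triangle through any element of $V_{12}$, and $\diam(G-v)\leq 2$ since any length-$2$ path in $G$ through $v$ connects $v_1$ and $v_2$, which stay adjacent in $G-v$. The only obstruction to $\rad(G-v)=2$ is a vertex $u$ with $\deg_G(u)=n-2$ whose unique non-neighbor is $v$, and such $u$ must belong to $V_{12}$. In that degenerate sub-case I would instead delete $u$: the graph $G-u$ has $n-1$ vertices, is connected (every surviving vertex is reached via $v_1$ or $v_2$), and still contains the triangle $vv_1v_2$, so $|E(G-u)|\geq n-1$, and hence $|E(G)|\geq\deg_G(u)+(n-1)=2n-3$.

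The main work, and the main obstacle, is the subcase $V_{12}=\emptyset$, where $v$ cannot be removed without destroying the triangle on $v_1v_2$. Here the triangle property applied to $uv_1$ for $u\in V_1$ forces $u$ to have a neighbor in $V_1$, whence $|V_1|\geq 2$ and similarly $|V_2|\geq 2$; set $a=|V_1|,b=|V_2|$ with $a+b=n-3$. The plan is to exploit the global identity $\sum_{z\in V}\binom{\deg(z)}{2}\geq\binom{n}{2}$, which holds because every pair of vertices shares a common neighbor. Subtracting the contributions of $v,v_1,v_2$ (of degrees $2$, $2+a$, $2+b$) reduces this to $\sum_{u\in V_1\cup V_2}\binom{\deg(u)}{2}\geq ab+a+b$. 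Writing $\deg(u)=1+d'(u)$ where $d'(u)$ is the degree inside $V\setminus N[v]$ and applying Cauchy--Schwarz to $\sum d'(u)^2$ turns this into the quadratic inequality
\[
e_2+\frac{2e_2^2}{a+b}\geq ab+(a+b),
\]
where $e_2=|E(G[V\setminus N[v]])|$. Combined with the bound $ab\geq 2(a+b)-4$ (a consequence of $a,b\geq 2$) and a short calculation comparing $(s-1)(3s-2)/s$ with $3s-4$, this forces $e_2\geq a+b=n-3$, so $|E(G)|=3+(a+b)+e_2\geq 2n-3$. I expect the delicate part to be verifying the quadratic inequality cleanly in the borderline small cases where $ab$ is close to $2(a+b)-4$.
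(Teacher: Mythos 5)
Your reduction to Proposition~\ref{prop:deltaGeQcase} when $\delta(G)\ge 3$ is exactly the paper's route, and your treatment of a degree-$2$ vertex $v$ with $V_{12}\neq\emptyset$ (delete $v$ and induct, with the near-universal vertex handled by a direct count) is sound and close in spirit to the paper's minimal-counterexample reduction in Claim~\ref{clm:mindeg3}. The problem is the subcase you yourself flag as the main work, $V_{12}=\emptyset$. There the key step is invalid: you derive two \emph{lower} bounds for the same quantity $\sum_{u\in V_1\cup V_2}\binom{\deg(u)}{2}$, namely $ab+a+b$ (from the common-neighbor count) and $e_2+\tfrac{2e_2^2}{a+b}$ (from Cauchy--Schwarz, since $\sum d'(u)^2\ge (\sum d'(u))^2/(a+b)=4e_2^2/(a+b)$), and then compare them with each other to get the ``quadratic inequality'' $e_2+\tfrac{2e_2^2}{a+b}\ge ab+a+b$. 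From $X\ge L_1$ and $X\ge L_2$ one cannot conclude $L_1\ge L_2$. To extract a lower bound on $e_2$ from $e_2+\tfrac12\sum d'(u)^2\ge ab+a+b$ you would need an \emph{upper} bound on $\sum d'(u)^2$ in terms of $e_2$; the inequality $\sum d'(u)^2\le (\sum d'(u))^2/(a+b)$ is the reverse of Cauchy--Schwarz and is false unless all $d'(u)$ are equal. The best generic upper bound, $\sum d'(u)^2\le 2e_2\Delta'\le 2e_2(a+b-1)$, only yields $e_2\ge ab/(a+b)+1$, far short of the required $e_2\ge a+b=n-3$. So the inequality you need is not established, and the second-moment approach as described cannot close this case.

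The statement $e_2\ge n-3$ does appear to be true, but it needs a structural argument rather than a degree-sum one. This is precisely what the paper does in the corresponding situation (the case $N(v_1)\cap N(v_2)=\{v\}$ inside Claim~\ref{clm:mindeg3}): writing $c_a,c_b$ for the numbers of components of $G[V_1]$ and $G[V_2]$, the triangle-property gives at least $|V_1|-c_a+|V_2|-c_b$ edges inside the parts, and the combination of $\diam(G)=2$ with the triangle-property forces at least two edges between every component of $G[V_1]$ and every component of $G[V_2]$, for a total of at least $(a-c_a)+(b-c_b)+2c_ac_b\ge a+b$ edges in $G[V_1\cup V_2]$. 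Replacing your Cauchy--Schwarz computation by this component count would repair the proof; as written, the argument has a genuine gap. (You should also say a word about the induction's base case, i.e.\ that no graph on at most $4$ vertices satisfies the hypotheses, but that is minor.)
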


\begin{proof}
    First suppose that $G$ has a vertex $v$ of degree $n-2$ i.e., $N[v]=V\backslash \{u\}$ for some vertex $u$ distinct from $v$.
    Let $c_v$ be the number of components in $G[N(v)]$.
    Since $G$ has diameter~$2$ and has the triangle-property, $u$ has at least two neighbors in every component and thus 
    $\abs{E(G)} \ge (n-2)+ (n-2-c_v) +2c_v \ge 2n-3.$
    
  %  Let $a,b$ be the two neighbors of $u.$

%    The subgraph $G[a,b,v,u]\sim K_4^-$ has size $2\cdot 4-3=5$ and every vertex $u \in V \backslash \{a,b,v,u\}$ is a neighbor of $v$ and of $a$ or $b$ (since $d(u,z)\le \diam(G)=2$).
%    This implies that there are at least $2(n-4)$ other edges and thus $m(G)\le 2n-3.$
    
    Now assume that $G$ has the minimum number of vertices 
%(for the subgraph-relation and minor-relation\footnote{or minimum order and minimum size among those with the minimum order}) 
for which the statement of Theorem~\ref{thr:mge2n-3} does not hold. Thus $G$ has no vertex of degree $n-1.$
    We first observe that  the minimum degree of $G$ is  at least~$3.$
    
    \begin{claim}\label{clm:mindeg3}
        We have $\delta(G)\geq 3$
    \end{claim}
    \begin{claimproof}
        Since $G$ has the triangle-property, $\delta(G)\ge 2.$ Suppose by way of contradiction that there is a vertex  $u$ in $G$  of degree $2,$ and let  $N(u)=\{a,b\}.$ By the triangle-property $ab$  is an edge of $G$. 
        If the edge $ab$ belongs to a triangle different from $abu,$ then $G \backslash u$ is a smaller graph with a diameter and radius equal to $2$ which has the triangle-property, $n-1$ vertices and less than $2(n-1)-3$ edges, contradicting the minimality of $G$.
        
        If $N(a) \cap N(b)=\{u\}$, then let $A=N(a) \backslash \{u,b\}$ and $B=N(b) \backslash \{u,a\}.$
        Let $G[A]$ and $G[B]$ have $c_a$ and $c_b$ components respectively. 
        Note here that $\delta(G[a]), \delta(G[b]) \ge 1$ since $G$ has the triangle-property and thus every edge between $a$ and $A$ belongs to a triangle with an edge in $A$.
        Since $\diam(G)=2,$ there is an edge from every component of $A$ to every component of $B$, even more, since every edge is in a triangle there are at least $2$ edges between each such pair of components. Note that since 
        Hence the size of $G$ is at least
        \[
        3+\abs{A} + \abs{B} + (\abs{A}-c_a)+(\abs B -c_b) + 
        2c_ac_b =2n-3 + (2c_ac_b-c_a-c_b) \ge 2n-3.\qedhere
        \]
    \end{claimproof}

    Since $G$ has a diameter equal to two and has a triangle-property  for every two vertices of $G$ there is a vertex in the common neighborhood. Thus by Proposition~\ref{prop:deltaGeQcase} for $q=3$ and $n \ge 5$ we are done since  $\delta(G)=3$. 
    For  $n \le 5$ there are no graphs satisfying conditions of Theorem~\ref{thr:mge2n-3}.
    %For $n \le 8,$ the statement is easily brute-force checked, but an alternative proof is also given in Appendix~\ref{sec:app}.
\end{proof}

\section{Conclusion}\label{sec:conc}

In this paper, we determined the largest non-trivial family $\F \subseteq \binom{[n]}{k}$ which is union-$q$-efficient for $n=qk.$
Due to its similarities with the Erd\H{o}s matching conjecture~\cite{Erdos65}, one may wonder about the largest non-trivial family $\F \subseteq \binom{[n]}k$ without $q$ pairwise disjoint sets in the regime where the trivial family $\binom{[kq-1]}{k}$ (see~\cite{Frankl17}) is extremal.

The question of Tuza (Question~\ref{ques:Tuza}), asking for the largest non-trivial family $\F \subseteq \binom{[n]}{k}$ which is union-$q$-efficient, is still widely open when $k+q \le n<qk$. Here the case $n=q+k$ is equivalent with a hypergraph-Turán problem (Proposition~\ref{prop:equi_HT}).
The same question for non-trivial families $\F \subseteq \binom{[n]}{k}$ without $q$-simplex is equally natural and interesting.

The largest non-trivial $d$-wise intersecting families have been determined by O'Neill and Verstra\"ete~\cite{OV21}, proving a conjecture of Hilton and Milner~\cite{HM67}.
Analogous to some other results about the equivalence of the largest families,
one may expect that these constructions are also the largest non-trivial families which do not contain a $(d-1)$-simplex?

\section*{Acknowledgement}

Thanks to Peter Dankelmann, for pointing towards reference~\cite{ER62}.

\bibliographystyle{abbrv}
\bibliography{eff_fam}

\end{document}